\theoremstyle{plain}
\newtheorem{thm}{Theorem}[section]
\newtheorem*{thm*}{Theorem}
\newtheorem{lem}[thm]{Lemma}
\newtheorem{prop}[thm]{Proposition}
\newtheorem{clm}[thm]{Claim}
\newtheorem{cor}[thm]{Corollary}
\newtheorem{conj}[thm]{Conjecture}
\theoremstyle{definition}
\newtheorem{df}[thm]{Definition}
\newtheorem{denot1}[thm]{Notation}
\newtheorem{denot}[thm]{Notations}
\newtheorem{assm}[thm]{Assumption}
\newtheorem{obs}[thm]{Observation}
\theoremstyle{remark}
\newtheorem{rem}[thm]{Remark}
\newtheorem{exmp}[thm]{Example}
\numberwithin{equation}{section}
\newcommand{\emd}[1]{\emph{#1}}
\newcommand{\Isom}{\mathrm {Isom}}
\newcommand{\id}{\mathrm {Id}}
\newcommand{\ud}{\mathrm{d}}
\newcommand{\sm}{\setminus}
\newcommand{\G}{G} 
\renewcommand{\S}{{\mathbb{S}}}
\newcommand{\HH}{{\mathbb{H}^2}}
\newcommand{\HHH}{{\mathbb{H}^3}}
\newcommand{\Hd}{{\mathbb{H}^d}}
\renewcommand{\c}[1]{\widehat{#1}}
\newcommand{\bdi}{\partial\,}
\newcommand{\bdt}{\mathrm{bd}\,}
\newcommand{\bd}{\mathrm{bd}\,}
\newcommand{\sint}{\mathrm{int}\,}
\newcommand{\cl}[1]{{\overline{#1}}}
\newcommand{\clcX}[1]{{\overline{#1}^\cX}}
\newcommand{\cX}{{\hat{X}}}
\newcommand{\cHd}{{\widehat{\mathbb{H}}^d}}
\newcommand{\hc}[1]{\accentset{\frown}{#1}}
\newcommand{\hcHd}{{\hc{\mathbb{H}}^d}}
\newcommand{\hbd}{\eth}
\newcommand{\pbd}{\pi} 
\newcommand{\N}{{\mathbb{N}}}
\newcommand{\R}{{\mathbb{R}}}
\newcommand{\Z}{{\mathbb{Z}}}
\renewcommand{\L}{\mathcal{L}}
\newcommand{\NP}{{\mathcal{N}}}
\newcommand{\p}{{p_{1/2}}}
\newcommand{\hR}{^{(h,R)}}
\newcommand{\ho}{o\hR}
\newcommand{\IOd}{(0;1]\times O(d)}
\newcommand{\gt}{\tilde{g}}
\newcommand{\CG}{\Gamma}
\newcommand{\al}{\alpha}
\renewcommand{\b}{\beta}
\newcommand{\g}{\gamma}
\renewcommand{\d}{\delta}
\newcommand{\e}{\varepsilon}
\renewcommand{\phi}{\varphi}
\renewcommand{\rho}{\varrho}
\newcommand{\s}{\sigma}
\renewcommand{\Pr}{\mathrm {P}}
\newcommand{\Est}{\mathrm {E}}
\newcommand{\pc}{{p_\mathrm c}}
\newcommand{\pu}{{p_\mathrm u}}
\newcommand{\op}{\omega}
\newcommand{\conn}{\leftrightarrow}
\newcommand{\then}{\Longrightarrow}
\newcommand{\Bo}{B^1_{r_0}}
\newcommand{\bo}{\mathbf{o}}
\newcommand{\vh}{v_\mathrm{h}}
\newcommand{\tends}[1]{\xrightarrow[#1]{}}
\newcommand{\RhR}{\mathcal{R}\hR}
\renewcommand{\(}{\left(}
\renewcommand{\)}{\right)}
\newlength{\bigcupdotwidth}
\newcommand{\bigcupdotsymb}{\makebox[\bigcupdotwidth]
  {\makebox[0pt]{$\displaystyle{\bigcup}$}\makebox[0pt]{$\cdot$}}}
\DeclareMathOperator*{\bigcupdot}{\bigcupdotsymb}
\newlength{\cupdotwidth}
\newcommand{\cupdotsymb}{\makebox[\cupdotwidth]
  {\makebox[0pt]{$\displaystyle{\cup}$}\makebox[0pt]{$\cdot$}}}
\DeclareMathOperator*{\cupdot}{\cupdotsymb}
\newcommand{\voidindop}[2]{#1_{\makebox[0pt]{$\scriptstyle#2$}}}
\newlength{\rvoidindopOPWD}
\newlength{\rvoidindopAUX}
\newcommand{\rvoidindop}[2]{%
\settowidth{\rvoidindopOPWD}{$#1$}%
\settowidth{\rvoidindopAUX}{$\scriptstyle#2$}%
\addtolength{\rvoidindopAUX}{-\rvoidindopOPWD}%
\rule{0.5\rvoidindopAUX}{0mm}%
#1_{\makebox[0pt]{$\scriptstyle#2$}}}
\newlength{\bigcapKXwidth}
\newcommand{\bigcapKX}{\makebox[\bigcapKXwidth]{$\displaystyle{\bigcap_{\substack{K\subseteq X\\ K\text{ -- compact}}}}$}}
\author{Jan Czajkowski\\
\small Argentinian National Research Council at the University of Buenos Aires\\
}
\title{One-point boundaries of ends of clusters\linebreak{}in percolation in $\Hd$}
\begin{document}

\maketitle
\thispagestyle{empty}

\begin{abstract}
Consider Bernoulli bond percolation on a graph nicely embedded in hyperbolic space $\Hd$ in such a way that it admits a transitive action by isometries of $\Hd$. Let $p_0$ be the supremum of such percolation parameters that no point at infinity of $\Hd$ lies in the boundary of the cluster of a fixed vertex with positive probability. Then for any parameter $p < p_0$, a.s.\ every percolation cluster has only one-point boundaries of ends.
\end{abstract}
\section{Introduction}

Consider a graph $\CG$. Fix $p\in[0,1]$ and for each edge, mark it as ``open'' with probability $p$; do it independently for all edges of $\CG$. We mark the edges which are not declared open, as ``closed''. The \emd{state} of an edge is the information whether it is open or closed.
The (random) set $\op$ of open edges of $\CG$ forms, together with all the vertices of $\CG$, a random subgraph of $\CG$. We will often identify it with the set of edges $\op$. This is a model of percolation, which we call \emd{Bernoulli bond percolation} on $\CG$ \emd{with parameter} $p$.

One of the main objects of interest in percolation theory are
the connected  components of the random subgraph $\op$, called \emd{clusters}, and their ``size''. Here, ``size'' can mean the number of vertices, the diameter and many other properties of a cluster which measure how ``large'' it is. For example, one may ask if there is a cluster containing infinitely many vertices in the random subgraph, with positive probability.
\begin{df}

We define the \emd{critical probability} for the above percolation model on $\CG$ as
$$\pc(\CG):=\inf\{p\in[0,1]:\textrm{with positive probability }\op\textrm{ has some infinite cluster}\}.$$
\end{df}
It follows from the Kolmogorov 0-1 law that if the graph $\CG$ is connected and locally finite, then the probability
$\Pr(\op\textrm{ has some infinite cluster})$ always
equals $0$ or $1$. Since this event is an \emd{increasing} event (see Definition \ref{incr}), its probability is an increasing function of $p$.
Thus, it is $0$ for $p<\pc(\CG)$ and $1$ for $p>\pc(\CG)$.
\begin{df}
A graph $\CG$ is
\emd{transitive}, if its automorphism group acts transitively on the set of vertices of $\CG$. $\CG$ is \emd{quasi-transitive}, if there are finitely many orbits of vertices of $\CG$ under the action of its automorphism group.
\end{df}
If $\CG$ is connected, locally finite and quasi-transitive, then it is known from \cite[Theorem~1]{NewmSchul} that the number of infinite clusters in $\op$ is
a.s.~constant and equal to
$0$, $1$ or $\infty$ (see also \cite[thm.~7.5]{LP}). Let us focus on the question when this number is $\infty$.
\begin{df}
The \emd{unification probability} for the above percolation model on any graph $\CG$ is the number
$$\pu(\CG):=\inf\{p\in[0,1]:\textrm{a.s.~there is a unique infinite cluster in $\op$}\}.$$
\end{df}

\begin{df}
A locally finite graph $\CG$ is \emd{non-amenable} if there is a constant $\Phi>0$ such that for every non-empty finite set $K$ of vertices of $\CG$, $|\bd K|\ge\Phi|K|$, where $\bd K$ is the set of edges of $\CG$ with exactly one vertex in $K$ (a kind of boundary of $K$).
Otherwise, $\CG$ is \emd{amenable}.
\end{df}

It turns out that if a connected, transitive, locally finite graph $\CG$ is amenable, then there is at most $1$ infinite cluster a.s.\ for bond and site Bernoulli percolation, see \cite[thm.~7.6]{LP}.
So, if $\pc(\CG)<\pu(\CG)$ then the graph $\CG$ must be non-amenable.
It is an interesting question whether the converse is true:
\begin{conj}[\cite{BS96}]\label{conjBS96}
If $\CG$ is non-amenable and quasi-transitive, then $\pc(\CG)<\pu(\CG)$.
\end{conj}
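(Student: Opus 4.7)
The statement is the celebrated Benjamini--Schramm conjecture, widely open in full generality, so what I would actually aim for here is a strategy tailored to the hyperbolic setting of this paper, using the parameter $p_0$ introduced in the abstract as a bridge between $\pc$ and $\pu$. The rough architecture is: produce many infinite clusters below $p_0$ using the one-point boundary result, and separately argue $\pc<p_0$ from volume/isoperimetric data.

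First I would establish $\pc<p_0$ for graphs $\CG$ nicely embedded in $\Hd$ with an isometric transitive action. Since $\CG$ is non-amenable, a first-moment or exponential-intersection-tail argument on geodesic rays in $\Hd$ should show that for $p$ slightly above $\pc$ the cluster of a fixed vertex, while infinite, does not accumulate at any specific boundary point of $\Hd$ with positive probability; this is a quantitative decay of correlations along directions to $\partial\Hd$, which is plausible because rays in $\Hd$ diverge exponentially and a barely-supercritical cluster is sparse.

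Second I would use the paper's main theorem to pass from $p<p_0$ to non-uniqueness. If a.s.\ every infinite cluster has only one-point boundaries at each of its ends, then each end of each infinite cluster picks a single ideal point of $\Hd$. Under the transitive isometric action, the law of the configuration is invariant, and one can run a mass-transport argument: assuming a unique infinite cluster, it must have continuum many ends that realise a dense set of ideal points; combining this with the one-point boundary condition and the $0$-$1$ law for the number of infinite clusters at a fixed $p$ should contradict uniqueness. Concretely, I would compare with the classical Burton--Keane / Newman--Schulman trichotomy of $\{0,1,\infty\}$ infinite clusters and rule out the ``$1$'' case in the regime $p<p_0$, leaving $\infty$ (since $p>\pc$ excludes $0$), hence $p\le \pu$ and $\pc<p_0\le\pu$.

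The hard part will be step two: upgrading the local/geometric ``one-point boundary of each end'' statement into the global topological statement that there are at least two infinite clusters. The scenario to exclude is a unique infinite cluster with uncountably many ends, each still having a one-point boundary at infinity --- this is consistent with the hypothesis and is exactly what happens for the full hyperbolic tessellation at $p=1$, so the argument must genuinely exploit $p<p_0$ and the non-accumulation of the cluster at any prescribed ideal point. A secondary difficulty is that any attempt to extend beyond graphs embedded in $\Hd$, say to Gromov hyperbolic Cayley graphs, requires replacing $\partial\Hd$ by a Gromov boundary and redoing the definition of $p_0$ in a setting where the ambient isometry group may be much smaller, which is where I expect the conjecture in its full quasi-transitive generality to resist this approach.
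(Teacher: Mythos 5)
This statement is labelled a \emph{conjecture} in the paper (the Benjamini--Schramm conjecture), and the paper offers no proof of it; it only surveys known partial results and then proves something weaker and different (one-point boundaries of ends for $p<p_0$). So the honest baseline for comparison is that there is nothing to match your proposal against, and your own text correctly flags that you are not giving a proof but a strategy. Judged as a strategy, it has two concrete gaps. First, your step one, $\pc<p_0$, is exactly the point the author explicitly declares open: Remark \ref{ineqp0} states that the author does not know which embedded graphs satisfy $\pc(\G)<p_0(\G)$, even among the Coxeter reflection-group examples, and does not know whether $\pc=p_0<\pu$ can occur. A ``first-moment or exponential-intersection-tail argument'' showing that a barely supercritical cluster avoids every fixed ideal point is not routine: the event $\{x\in\bdi C(o)\}$ requires the cluster to enter every half-space neighbourhood of $x$, and ruling this out for \emph{all} $x$ simultaneously just above $\pc$ is a strong non-accumulation statement for which no mechanism is given. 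Second, your step two is aimed at the wrong target and, as you yourself observe, would fail as stated: the main theorem is perfectly consistent with a unique infinite cluster all of whose (possibly uncountably many) ends have one-point boundaries, so no contradiction with uniqueness can be extracted from the one-point boundary property alone.

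It is worth noting that the half of your chain that is actually true, namely $p_0\le\pu$, is proved in the paper by a two-line argument that does not use the main theorem, mass transport, or ends at all: if there is a.s.\ a unique infinite cluster, then $\Pr_p(o\conn v)\ge a^2$ uniformly in $v$ by a correlation inequality, whence $\Pr_p(x\in\bdi C(o))\ge a^2>0$ for any $x\in\bdi\G$ and $p\notin\NP$. Moreover, Example \ref{p0<pu} in the paper exhibits a graph with $p_0<\pu$, so even if you proved $\pc<p_0$ your route would only give the conjecture for the class where that inequality holds, not in the quasi-transitive generality of the statement. In short: the proposal identifies a reasonable reduction ($\pc<p_0\le\pu$), but the load-bearing inequality $\pc<p_0$ is an open problem acknowledged as such by the author, and the auxiliary machinery you propose for $p_0\le\pu$ is both unnecessary and, in the form sketched, non-functional.
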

There are several classes of non-amenable graphs, for which the above conjecture has been established. Let us mention here a few of them. For an infinite regular tree it is actually folklore.
It was shown for bond percolation on the Cartesian product of $\mathbb{Z}^d$ with an infinite regular tree of sufficiently high degree in \cite{GrimNewm}.
Later, it was shown for site percolation on Cayley graphs of a wide class of Fuchsian groups in \cite{Lal}, and
for site and bond percolation on transitive, non-amenable, planar graphs with one end in \cite{BS01}.
(A graph has \emd{one end}, if after throwing out any finite set of vertices it has exactly one infinite component.) These two result concern the hyperbolic plane $\HH$. Similarly, the conjecture is obtained in \cite{Cz3ph} for many tiling graphs in $\HHH$.
There is also a rather general result in \cite{PSN} saying that any finitely-generated non-amenable group has a Cayley graph $\CG$ with $\pc(\CG)<\pu(\CG)$ for bond percolation.

An interested reader may consult e.g.\ \cite{Grim} and \cite{LP}, which give quite wide introduction to percolation theory.

\subsection{Boundaries of ends}
In this paper we consider percolation clusters on graphs ``naturally'' embedded in $\Hd$ with $d\ge 2$.
We define the \emd{boundaries of ends} of a cluster in $\Hd$ as follows:
\begin{denot}
For any topological space $X$, by $\sint_X$ and $\overline{\makebox[1em]{$\cdot$}}^X$ we mean operations of taking interior and closure, respectively, in the space $X$.
\end{denot}
\begin{df}\label{dfbd}
Let $X$ be a completely regular Hausdorff ($\mathrm T_{3\frac{1}{2}}$), locally compact topological space. Then:
\begin{itemize}
\item An \emd{end} of a subset $C\subseteq X$ is a function $e$ from the family of all compact subsets of $X$ to the family of subsets of $C$ such that:
\begin{itemize}
 \item for any compact $K\subseteq X$ the set $e(K)$ is one of the component of $C\setminus K$;
 \item for $K\subseteq K'\subseteq X$ -- both compact -- we have
 $$e(K)\supseteq e(K').$$
\end{itemize}
\end{itemize}
Now let $\cX$ be an arbitrary compactification of $X$. Then
\begin{itemize}
\item The \emd{boundary} of $C\subseteq X$ is the following:
$$\bdi C= \clcX{C}\setminus X.$$
\item Finally the \emd{boundary of an end} $e$ of $C\subseteq X$ is
$$\bdi e=\bigcapKX\bdi e(K).$$
\end{itemize}
We also put $\c C = \clcX{C}$. Whenever we use the usual notion of boundary (taken in $\Hd$ by default), we denote it by $\bdt$ to distinguish it from $\bdi$.
\par We use these notions in the context of the hyperbolic space $\Hd$, where the underlying compactification is the compactification $\cHd$ of $\Hd$ by its set of points at infinity\footnote{For $\Hd$, it is the same as its Gromov boundary---see \cite[Section III.H.3]{BH}.} (see \cite[Definition II.8.1]{BH}). The role of $C$ above will be played by percolation clusters in $\Hd$.
\par Thus, $\bdi\Hd =\cHd\sm\Hd$ is the set of points at infinity. If $\Hd$ is considered in its Poincar\'e disc model\footnote{It is called also Poincar\'e ball model.}, $\bdi\Hd$ is naturally identified with the boundary sphere of the Poincar\'e disc.
\end{df}
\begin{rem}\label{smarthat}
In this paper, whenever we consider a subset of $\Hd$ denoted by a symbol of the form e.g.\ $C_x^y(z)$, we use the notation  $\c C_x^y(z)$ for its closure in $\cHd$ instead of $\c{C_x^y(z)}$, for aesthetic reasons.
\end{rem}
Let us define a percolation threshold $\p$ as the supremum of percolation parameters $p$ such that $\Pr_p$-a.s.\ all infinite clusters in $\omega$ have only one-point boundaries of ends.
The question is if $\pc<\p<\pu$ e.g.\ for some natural tiling graphs in $\Hd$ for $d\ge 3$. In such case one will have an additional percolation threshold in the non-uniqueness phase.
In this paper we give a sufficient condition for $p$-Bernoulli bond percolation\ to have only one-point boundaries of ends of infinite clusters, for a large class of transitive graphs embedded in $\Hd$. That sufficient condition is ``$p<p_0$'', where $p_0$ is a threshold defined in Definition \ref{dfp0}. The key part of the proof is an adaptation of the proof of Theorem (5.4) from \cite{Grim}, which in turn is based on \cite{Men}.
\par In the next section we formulate the assumptions on the graph and the main theorem.

\subsection{The graph and the sufficient condition}

\begin{df}
Let for any graph $\CG$, $V(\CG)$ denote its set of vertices and $E(\CG)$ its set of edges. In this paper, we often think of a $\op\subseteq E(\CG)$ as a sample, called \emd{percolation configuration}. Accordingly, $2^{E(\CG)}$ is the sample space for modelling Bernoulli bond percolation. The accompanying $\s$-algebra on it consists of all Borel sets (with respect to the product topology). Taking the natural product measure $\Pr_p$ on $2^{E(\CG)}$, we treat the configuration $\op$ as the random (set-valued) variable described above.

For any graph $\G$ embedded in arbitrary metric space, we call this embedded graph \emd{transitive under isometries} if some group of isometries of the space acts on $\G$ by graph automorphisms transitively on its set of vertices.

A graph embedding in a topological space is \emd{locally finite} if every compact subset of $\Hd$ meets only finitely many vertices and edges of the embedded graph.

By a \emd{simple} graph we mean a graph without loops and multiple edges.
\end{df}
\begin{assm}\label{assmG}
Throughout this paper{} we assume that $\G$ is a connected (simple) graph embedded in $\Hd$, such that:
\begin{itemize}
\item its edges are geodesic segments;
\item the embedding is locally finite;
\item it is transitive under isometries.
\end{itemize}
Let us also pick a vertex $o$ (for ``origin'') of $\G$ and fix it once and for all.
\end{assm}

Note that by these assumptions, $V(\G)$ is countable, $\G$ has finite degree and is a closed subset of $\Hd$.

\begin{df}\label{dfp0}
For $v\in V(\G)$, by $C(v)$ we mean the percolation cluster of $v$ in $\G$.
Let $\NP(\G)$ (for ``null''), or $\NP$ for short, be defined by
\begin{equation}\label{assmnull}
\NP(\G) = \{p\in[0;1]:(\forall x\in\bdi\Hd)(\Pr_p(x\in\bdi C(o))=0)\}
\end{equation}
and put
\begin{equation}
p_0=p_0(\G)=\sup\NP(\G).
\end{equation}
\end{df}
\begin{rem}
In words, $\NP$ is the set of parameters $p$ of Bernoulli bond percolation{} on $G$ such that~no point of $\bdi\Hd$ lies in boundary of the cluster of $o$ with positive probability.
Note that $\NP$ is an interval (the author does not know whether it is right-open or right-closed) because the events $\{x\in\bdi C(o)\}$ for $x\in\bdi\Hd$ are all increasing (see Definition \ref{incr}), so $\Pr_p(x\in\bdi C(o))$ is a non-decreasing function of $p$ (see \cite[Theorem (2.1)]{Grim}). That allows us to think of $p_0$ as the point of a phase transition.
\par We are going to make a few more remarks concerning the above definition and how $p_0$ may be related to the other percolation thresholds in Section \ref{secremsp0}.
\end{rem}

Now, we formulate the main theorem:
\begin{thm}\label{mainthm}
Let $G$ satisfy the Assumption \ref{assmG}. Then, for any $0\le p<p_0$, a.s. every cluster in $p$-Bernoulli bond percolation\ on $\G$ has only one-point boundaries of ends.
\end{thm}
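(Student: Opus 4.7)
The plan is to argue by contradiction. Assume $p<p_0$ and that, with positive $\Pr_p$-probability, some cluster has an end $e$ with $|\bdi e|\ge 2$; I will extract a point $x\in\bdi\Hd$ with $\Pr_p(x\in\bdi C(o))>0$, contradicting the definition of $p_0$. Since $V(\G)$ is countable and isometries act transitively on it, a union bound together with the isometry invariance of $\Pr_p$ reduces matters to showing $\Pr_p(\exists\,e\text{ end of }C(o):|\bdi e|\ge 2)=0$. Also, $\bdi e$ is always non-empty: each $e(K)$ is unbounded (otherwise $K':=K\cup\cl{e(K)}$ would make the non-empty component $e(K')$ of $C\sm K'$ satisfy $e(K')\subseteq e(K)\sm K'=\emptyset$), so each $\bdi e(K)$ is a non-empty compact subset of $\bdi\Hd$, and the decreasing intersection over a cofinal sequence of compacta is non-empty.

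I next quantify ``multi-point'' using a visual metric $\rho$ on $\bdi\Hd$ coming from the Poincar\'e ball model. Writing $\{|\bdi e|\ge 2\}=\bigcup_{n\in\N}\{\mathrm{diam}_\rho\bdi e\ge 1/n\}$, countable sub-additivity yields $\delta>0$ with
$$\Pr_p\bigl(\exists\,e\text{ end of }C(o):\mathrm{diam}_\rho\bdi e\ge\delta\bigr)>0.$$
Cover $\bdi\Hd$ by finitely many open sets of $\rho$-diameter $\le\delta/8$, and consider the finitely many unordered pairs $(U,V)$ of such sets at mutual $\rho$-distance $\ge\delta/4$. On the above event, for every $R>0$ the component $e(\cl{B_R(o)})$ of $C(o)\sm\cl{B_R(o)}$ has boundary of $\rho$-diameter at least $\delta$, so it contains two vertices joined by an open path in $\op\sm\cl{B_R(o)}$ whose endpoints lie in some fixed pair $(\tilde U,\tilde V)$ of $\cHd$-extensions of the covering sets. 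Pigeonhole and monotonicity in $R$ then give a single pair $(U,V)$ for which the decreasing-in-$R$ event
$$E_R(U,V):=\{\exists\text{ open path in }\op\sm\cl{B_R(o)}\text{ from }V(\G)\cap\tilde U\text{ to }V(\G)\cap\tilde V\}$$
satisfies $\inf_R\Pr_p(E_R(U,V))>0$.

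The heart of the argument, and the main obstacle, is then an adaptation of Grimmett's Theorem~(5.4) -- a form of Menshikov's exponential-decay theorem -- to this family of events. Russo's formula applied to a finite-volume approximation of $E_R(U,V)$ produces a differential inequality controlling its decay in $R$ by a mean-pivotal count, and Menshikov's inductive argument allows the failure of $\Pr_p(E_R(U,V))\to 0$ to be converted into an open arm from $o$ reaching arbitrarily deep $\cHd$-neighborhoods of some specific boundary point. A compactness argument on $\bdi\Hd$ is needed to pin down a single limit point $x$ from the continuous family of possible ``target directions'' produced by successive pivotal openings; this step -- extracting a fixed $x$ from a continuous range, while respecting the BK-type independence used in Menshikov's induction and the graph-embedded hyperbolic geometry -- is the principal technical difficulty. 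The resulting inequality $\Pr_p(x\in\bdi C(o))>0$ contradicts $p<p_0$, completing the proof.
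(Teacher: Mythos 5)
Your setup (contradiction, reduction to $C(o)$, quantifying the multi-point end by a positive-probability event that some end's boundary has diameter $\ge\delta$) parallels the paper's opening moves, but the proof has a genuine gap at exactly the point you flag as ``the heart of the argument'': the adaptation of Menshikov/Grimmett (5.4) is only described, not carried out, and---more importantly---the target you set for it is almost certainly not reachable. From $\inf_R\Pr_p(E_R(U,V))>0$ you want to extract a single \emph{deterministic} point $x\in\bdi\Hd$ with $\Pr_p(x\in\bdi C(o))>0$. But the two-arm events $E_R(U,V)$ only force $\bdi C(o)$ (or the boundary of some union of clusters) to meet both $\cl{U}$ and $\cl{V}$; a compactness argument yields a \emph{random} limit point $x(\omega)$, and a random closed subset of $\bdi\Hd$ of positive diameter can easily charge every fixed point with probability $0$ (this is precisely why $\NP$ is defined by a pointwise condition and why the whole theorem is non-trivial). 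No amount of pigeonholing over a finite cover resolves this, because the cover's mesh is fixed while the condition defining $p_0$ is about single points. So the concluding contradiction ``$\Pr_p(x\in\bdi C(o))>0$'' does not follow from what you have established.

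The paper avoids this trap by using the hypothesis $p<p_0$ in a completely different way: it never tries to manufacture a charged boundary point. Instead, $p<p_0$ is used only through (i) the a.s.\ Euclidean boundedness of clusters in the half-space model (take $x=\infty$ in the definition of $\NP$), and (ii) a uniform exponential-decay estimate $g_p(r)\le e^{-\psi r}$ for percolation restricted to the horoball complement $L=\R^{d-1}\times(0;1]$, uniformly over all placements $\Phi^{(h,R)}$ of the graph (Theorem \ref{lemMen} and Lemma \ref{corMen}); the only probabilistic input to that Menshikov-type argument is the tightness of the Euclidean cluster size, obtained from (i) by upper semicontinuity and compactness of $O(d)$ (Claim \ref{Mtight}). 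The multi-point end is then killed geometrically: since $C$ is Euclidean-bounded, the tail $e(\cl{C\sm L^h})$ lies in $L^h$ for every $h$ and keeps projection diameter $\ge\delta$; applying the hyperbolic isometry $x\mapsto 2^k x$ turns this into a cluster of projection diameter $\ge 2^k\delta$ in a copy of the slab $L$, met by one of only $(2^k)^{d-1}$ translates of a fixed box, and Lemma \ref{corMen} makes this event have probability $\le (2^k)^{d-1}\al e^{-\phi\d 2^{k-1}}\to 0$, contradicting the uniform lower bound $a$. If you want to salvage your outline, you should redirect the Menshikov machinery toward proving such a uniform decay statement in the slab (this is where Russo's formula, BK, and the finite-volume truncation $L_\d$ actually live in the paper) rather than toward pinning down a boundary point.
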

The key ingredient of the proof of this theorem is Lemma \ref{corMen}, which is a corollary of Theorem \ref{lemMen}. The latter is quite interesting in its own right. They are presented (along with a proof of Lemma \ref{corMen}) in separate Section \ref{secexpdec}. The elaborate proof of Theorem \ref{lemMen}, rewritten from the proof of Theorem (5.4) in \cite{Grim}, is deferred to Section \ref{prflemMen}. The proof of this theorem itself, is presented in Section \ref{secprfmainthm}.
\begin{rem}
In the assumptions of the above theorem, $p_0$ can be replaced by
$$p_0' = \sup\{p\in[0;1]: g_p(r)\tends{r\to\infty}0\}$$
with $g_p(r)$ from the Definition \ref{gpr} because only the fact that for $p<p_0$, $g_p(r)\tends{r\to\infty}0$ (implied by Claim \ref{Mtight}) is used (in the proof of Lemma \ref{gt<=sqrt}). Accordingly, $p_0'\ge p_0$. Nevertheless, the author does not know if it is possible that $p_0'>p_0$.
\end{rem}

\subsection{Remarks on the sufficient condition}\label{secremsp0}

In this section we give some remarks on the threshold $p_0$ and on the events $\{x\in\bdi C(o)\}$ (used to define $\NP$).
\begin{df}
For $A,B\subseteq\Hd$, let $A\conn B$ be the event that there is an open path in the percolation process (given by the context) intersecting both $A$ and $B$. We say also that such path \emd{joins} $A$ and $B$. If any of the sets is of the form $\{x\}$, we write $x$ instead of $\{x\}$ in that formula and those phrases.
\end{df}
\begin{rem}\label{nullmeasb}
For $x\in\bdi\Hd$, the configuration property $\{x\in\bdi C(o)\}$ is indeed a (measurable) random event.
Even more: the set
\begin{equation}
A = \{(x,\omega)\in \bdi\Hd\times 2^{E(\G)}:x\in\bdi(C(o))(\omega)\}
\end{equation}
is measurable in the product $\bdi\Hd\times 2^{E(\G)}$ (where the underlying $\s$-field on $\bdi\Hd$ is the $\s$-field of Borel sets). To prove it, let us introduce a countable family $(H_n)_{n\in\N}$ of half-spaces such that~the family of open discs
$$\{\sint_{\bdi\Hd}\bdi H_n:n\in\N\}$$
is a base of the topology on $\bdi\Hd$. Then, let us rewrite the condition defining $A$:
\begin{eqnarray}
x\in \bdi C(o) &\iff& (\forall n)(x\in\sint_{\bdi\Hd}\bdi H_n \then C(o)\cap H_n\neq\emptyset) \iff\\
&\iff& (\forall n)\bigl(\neg(x\in\sint_{\bdi\Hd}\bdi H_n) \lor\\
&&\lor\, \bigl(x\in\sint_{\bdi\Hd}\bdi H_n \land (\exists v\in V(\G)\cap H_n)(o\conn v)\bigr)\bigr),\notag
\end{eqnarray}
which is a measurable condition, as the sets
$$\{(x,\omega)\in \bdi\Hd\times 2^{E(\G)}: x\in\sint_{\bdi\Hd}\bdi H_n\} = \sint_{\bdi\Hd}\bdi H_n\times 2^{E(\G)}$$
and
$$\{(x,\omega)\in \bdi\Hd\times 2^{E(\G)}: o\conn v\textrm{ in }\omega\}$$
are measurable for $n\in\N$, $v\in V(\G)$.
\par For $x\in\bdi\Hd$, the measurability of the event $\{x\in\bdi C(o)\}$ follows the same way if we treat it as the $x$-section of $A$:
\begin{equation}
\{x\in\bdi C(o)\} = \{\omega:(x,\omega)\in A\}.
\end{equation}

\end{rem}

\begin{rem}\label{ineqp0}
The threshold $p_0$ is bounded as follows:
$$\pc\le p_0 \le\pu.$$
The inequality $\pc\le p_0$ is obvious and the inequality $p_0\le\pu$ can be shown as follows: if $p$ is such that $\Pr_p$-a.s.~there is a unique infinite cluster in $\G$, then with some probability\ $a>0$, $o$ belongs to the infinite cluster and by BK-inequality (see Theorem \ref{BKineq}), for any $v\in V(\G)$,
$$\Pr_p(o\conn v)\ge a^2.$$

Take $x\in\bdi\G$. Choose a decreasing (in the sense of set inclusion) sequence $(H_n)_n$ of half-spaces such that $\bigcap_{n=1}^\infty \sint_{\bdi\Hd}\bdi H_n=\{x\}$. Since $x\in\bdi\G$, we have $V(\G)\cap H_n \neq \emptyset$ for all $n$. Therefore
\begin{align}
\Pr_p(x\in\bdi C(o)) &= \Pr_p\left(\bigcap_{n\in\N}\{(\exists v\in V(\G)\cap H_n)(o\conn v)\}\right) =\\
&= \lim_{n\to\infty} \Pr_p((\exists v\in V(\G)\cap H_n)(o\conn v)) \ge a^2.
\end{align}
Hence, $p\notin\NP$, so $p\ge p_0$, as desired.
\par The main theorem (Theorem \ref{mainthm}) is interesting when $\pc<p_0$. The author does not know what is the class of embedded graphs $\G$ (even among those arising from Coxeter reflection groups as in \cite{Cz3ph}) satisfying $\pc(\G)<p_0(\G)$. The author suspects that $p_0=\pu$ for such graphs as in \cite{Cz3ph} in the cocompact case (see Remark \ref{meas0->nullae}; in such case most often we would have $p_0>\pc$). On the other hand, there are examples where $p_0<\pu$ (see Example \ref{p0<pu} below). Still, the author does not know if it is possible that $\pc=p_0<\pu$.
\end{rem}
\begin{exmp}\label{p0<pu}
Let $\Pi$ be an unbounded polyhedron with $6$ faces in $\HHH$ whose five faces are cyclically perpendicular and the sixth one is disjoint from them. Then, the group $G$ generated by the (hyperbolic) reflections in the faces of $\Pi$ is isomorphic to the free product of $\mathbb Z_2$ and the group $G_5 < \Isom(\HH)$ generated by the reflections in the sides of a right-angled pentagon in $\HH$. Let $\CG$ and $\CG_5$ be the Cayley graphs of $G$ and $G_5$, respectively. Then, $\CG$ has infinitely many ends, so from \cite[Exercise 7.12(b)]{LP} $\pu(\CG)=1$. Next, if $p>\pu(\CG_5)$, then with positive probability\ $\bdi C(o)$ contains the whole circle $\bdi(G_5\cdot o)$. (It is implied by Theorem~4.1 and Lemma~4.3 from \cite{BS01}.) Hence, $p_0\le\pu(\CG_5)<\pu(\CG)$, as $\pu(\CG_5)<1$ by \cite[Theorem~10]{BB}. Moreover, the conclusion of the main theorem (Theorem \ref{mainthm}) fails for any $p>\pu(\CG_5)$.
\end{exmp}
\begin{rem}\label{meas0->nullae}
This remark is hoped to explain a little the suspicion that for the Cayley graph of a cocompact Coxeter reflection group in $\Hd$, we have $p_0=\pu$ (Remark \ref{ineqp0}). It is based on another suspicion: for $p<\pu$ in the same setting,
\begin{equation}\label{|bd|=0}
\Pr_p\textrm{-a.s.~}|\bdi C(o)|=0,
\end{equation}
which is a property of the $p$-Bernoulli bond percolation quite similar to $\Pr_p(x\in\bdi C(o))=0$.
Here $|\cdot|$ can be the Lebesgue measure on $\bdi\Hd=\S^{d-1}$, or the Poisson measure on $\bdi\Hd$ arising from the simple random walk on $\G$ starting at $o$. (For a definition of a simple random walk and an explanation of Poisson boundary, see \cite{Woe}, Section 1.C and Section 24., p.~260, respectively.) If one proves \eqref{|bd|=0}, then the probability vanishing in \eqref{assmnull} follows for $|\cdot|$-a.e. point $x\in\bdi\Hd$. In addition, because the induced action of such cocompact group on $\bdi\Hd$ has only dense orbits (see e.g.~\cite[Proposition 4.2]{KapBen}), one might suspect that in such situation as above, $\Pr_p(x\in\bdi C(o))=0$ holds for all $x\in\bdi\Hd$.
\end{rem}

\newcommand{\maled}{d}
\section{Definitions: percolation on a fragment of $\mathbb{H}^{\lowercase{d}}$}

Here we are going to introduce some notions and notations used in Theorem \ref{lemMen} and Lemma \ref{corMen} and in the proof of the main theorem.
\begin{denot1}
Let us adopt the convention saying that natural numbers include $0$. We denote the set of all positive natural numbers by $\N_+$.
\end{denot1}
\begin{df}\label{dfhHd}
For the rest of this paper, consider $\Hd$ in its fixed half-space Poincar\'e model (being the upper half-space $\R^{d-1}\times(0;\infty)$) in which the point $o$ (the distinguished vertex of $\G$) is represented by $(0,\ldots,0,1)$. (It will play the role of origin of both $\Hd$ and $\G$.)
\par The half-space model of $\Hd$ and its relation to the Poincar\'e ball model are explained in \cite[Chapter I.6, p.~90]{BH}. Note that the inversion of $\R^d$ mapping the Poincar\'e ball model $\mathbb{B}^d$ to our fixed half-space model sends one point of the sphere $\bdt\mathbb{B}^d$ to infinity. In the context of the half-space model, we treat that ``infinity'' as an abstract point (outside $\R^d$) compactifying $\R^d$. We call it the \emd{point at infinity} and denote it by $\infty$.
\par Let $\hcHd$ be the closure of $\Hd$ in $\R^d$ and $\hbd\Hd=\hcHd\sm\Hd$ 
(so here $\hcHd=\R^{d-1}\times[0;\infty)$ and $\hbd\Hd=\R^{d-1}\times\{0\}$). We identify $\hcHd$ with $\cHd\sm\{\infty\}$ and $\hbd\Hd$ with $\bdi\Hd\sm\{\infty\}$ in a natural way. Also, for any closed $A\subseteq\Hd$, let $\hc{A}=\cl{A}^\hcHd$ and $\hbd A= \hc{A}\sm A$. (Here, for complex notation for a subset of $\Hd$ (of the form e.g.\ $A_x^y(z)$), we use the same notational convention for $\,\hc\cdot\,$ as for $\,\c\cdot\,$---see Remark \ref{smarthat}.)
\par Although sometimes we use the linear and Euclidean structure of $\R^d$ in $\Hd$, the default geometry on $\Hd$ is the hyperbolic one, unless indicated otherwise. On the other hand, by the \emd{Euclidean metric of the disc model} we mean the metric on $\cHd$ induced by the embedding of $\cHd$ in $\R^d$ (as a unit disc) arising from the Poincar\'e disc model of $\Hd$. Nevertheless, we are going to treat that metric as a metric on the set $\hcHd\cup\{\infty\}=\cHd$, never really considering $\Hd$ in the disc model.
\end{df}

\begin{df}
For $k>0$ and $x\in\R^{d-1}\times\{0\}$, by $y\mapsto k\cdot y$ and $y\mapsto y+x$ (or $k\cdot$, $\cdot+x$, respectively, for short) we mean always just a scaling and a translation of $\R^d$, respectively, often as isometries of $\Hd$. (Note that restricted to $\Hd$ they are indeed hyperbolic isometries.)
\end{df}

\begin{denot}
Let $\Isom(\Hd)$ denote the isometry group of $\Hd$.
\par For any $h\in(0;1]$ and $R\in O(d)$ (the orthogonal linear group of $\R^d$) the pair $(h,R)$ determines uniquely an isometry of $\Hd$ denoted by $\Phi\hR$ such that~$\Phi\hR(o)=(0,\ldots,0,h)$ and $D\Phi\hR(o)=hR$ (as an ordinary derivative of a function $\R^{d-1}\times(0,\infty)\to\R^d$).
\par Let $\G\hR$ denote $\Phi\hR[\G]$. Similarly, for any $\Phi\in\Isom(\Hd)$ let $\G^\Phi=\Phi[\G]$. Further, in the same fashion, let $o\hR=\Phi\hR(o)$ (which is $h\cdot o$) and $o^\Phi=\Phi(o)$.
\end{denot}
\begin{df}
For any $p\in[0;1]$, whenever we consider $p$-Bernoulli bond percolation{} on $\G^\Phi$ for $\Phi\in\Isom(\Hd)$, we just take $\Phi[\omega]$, where $\omega$ denotes the random configuration in $p$-Bernoulli bond percolation\ on $G$.
\end{df}
\begin{rem}\label{remcoupl}
One can say that this is a way of coupling of the Bernoulli bond percolation\ processes on $\G^\Phi$ for $\Phi\in\Isom(\Hd)$.
\par Formally, the notion of ``$p$-Bernoulli bond percolation\ on $\G^\Phi$'' is not well-defined because for different isometries $\Phi_1$, $\Phi_2$ of $\Hd$ such that $\G^{\Phi_1}=\G^{\Phi_2}$, still the processes $\Phi_1[\omega]$ and $\Phi_2[\omega]$ are different. Thus, we are going to use the convention that the isometry $\Phi$ used to determine the process $\Phi[\omega]$ is the same as used in the notation $\G^\Phi$ determining the underlying graph.
\end{rem}
\begin{denot}
Let $L^h=\R^{d-1}\times(0;h]\subseteq\Hd$ and put $L=L^1$. (In other words, $L^h$ is the complement of some open horoball in $\Hd$, which viewed in the Poincar\'e disc model $\mathbb{B}^d$ is tangent to $\bdi\mathbb{B}^d$ at the point corresponding to $\infty$.)
\end{denot}

\begin{df}
Consider any closed set $A\subseteq\Hd$ intersecting each geodesic line only in finitely many intervals and half-lines of that line (every set from the algebra of sets generated by convex sets satisfies this condition, e.g.~$A=L^h$). Then, by $\G^\Phi\cap A$ we mean an embedded graph in $A$ with the set of vertices consisting of $V(\G^\Phi)\cap A$ and the points of intersection of the edges of $\G^\Phi$ with $\bdt A$ and with the edges being all the non-degenerate components of intersections of edges of $\G^\Phi$ with $A$. The percolation process on $\G^\Phi\cap A$ considered in this paper\ is, by default, the process $\Phi[\omega]\cap A$. The same convention as in Remark \ref{remcoupl} is used for these processes.
\end{df}

\begin{rem}
To prove the main theorem, we use the process $\Phi\hR[\omega]\cap L^H$ for $p\in[0;1]$ and for different $H$. In some sense, it is $p$-Bernoulli bond percolation{} on $\G\hR\cap L^H$: on one hand, this process is defined in terms of the independent random states of the edges of $\G\hR$, but on the other hand, some different edges of the graph $\G\hR\cap L^H$ are obtained from the same edge of $\G\hR$, so their states are stochastically dependent. Nevertheless, we are going to use some facts about Bernoulli percolation\ for the percolation process on $\G\hR\cap L^H$. In such situation, we consider the edges of $\G\hR$ intersecting $L^H$ instead of their fragments obtained in the intersection with $L^H$.
\end{rem}

\section{Exponential decay of the cluster size distribution}\label{secexpdec}

We are going to treat the percolation process $\Phi\hR[\omega]\cap L^H$ roughly as a Bernoulli percolation process on the standard lattice $\mathbb{Z}^{d-1}$ (given graph structure by joining every pair of vertices from $\mathbb{Z}^{d-1}$ with distance $1$ by an egde). It is motivated by the fact that $\mathbb{Z}^{d-1}$ with the graph metric is quasi-isometric to $\hbd\Hd$ or $L^H$ with the Euclidean metric. (Two metric spaces are \emd{quasi-isometric} if, loosely speakig, there are mappings in both directions between them which are bi-Lipschitz up to an additive constant. For strict definition, see \cite[Definition I.8.14]{BH}; cf.\ also Exercise 8.16(1) there.)

In the setting of $\mathbb{Z}^{d-1}$, we have a theorem on exponential decay of the cluster size distribution, below the critical threshold of percolation:
\begin{thm}[\protect{\cite[Theorem (5.4)]{Grim}}]\label{thm5.4}
For any $p<\pc(\mathbb{Z}^d)$ there exists $\psi(p)>0$ such that in $p$-Bernoulli bond percolation{} on $\mathbb{Z}^d$
$$\Pr_p(\textrm{the origin $(0,\ldots,0)$ is connected to the sphere of radius }n)<e^{-\psi(p)n}\quad\textrm{for all }n,$$
where the spheres are considered in the graph metric on $\mathbb{Z}^d$.
\\\qed
\end{thm}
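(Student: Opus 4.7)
The plan is to follow Menshikov's classical argument for exponential decay below the critical point, which proceeds via a differential inequality for the connection function $g_p(n):=\Pr_p(A_n)$, where $A_n:=\{0\conn\bdt B(n)\}$ and $B(n)$ denotes the graph-metric ball of radius $n$ in $\mathbb{Z}^d$. Since $A_n$ is increasing and depends on the states of only finitely many edges (those inside $B(n)$), Russo's formula immediately yields
\begin{equation}
\frac{d}{dp}g_p(n)=\Est_p[N(n)],
\end{equation}
where $N(n)$ counts the edges pivotal for $A_n$.

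The heart of the argument is to bound $\Est_p[N(n)]$ from below in terms of $g_p$ itself. On the event $A_n$, the pivotals $e_1,\dots,e_{N(n)}$ can be linearly ordered along any open path from $0$ to $\bdt B(n)$, and the ``sausages'' between consecutive pivotals use edge-disjoint parts of the configuration. Exploiting this and the BK inequality, one can compare the radii at which successive pivotals first appear to a renewal process whose increment tails are controlled by $g_p(\cdot)$. After careful book-keeping this yields the Menshikov differential inequality, of the form
\begin{equation}
\frac{d}{dp}g_p(n)\;\ge\;\frac{n\,g_p(n)}{1+\sum_{k=0}^{n-1}g_p(k)}.
\end{equation}

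To conclude, fix $p<p'<\pc$. Subcriticality forces $g_{p'}(k)\to 0$ as $k\to\infty$, and a standard preliminary application of the differential inequality at $p'$ shows $\sum_k g_{p'}(k)<\infty$, hence $\sum_{k=0}^{n-1}g_q(k)$ stays uniformly bounded in $n$ for $q\in[p,p']$. Integrating the differential inequality across $[p,p']$ and using this uniform bound gives $g_p(n)\le e^{-\psi(p)n}$ for some $\psi(p)>0$ depending on the gap $p'-p$.

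The main obstacle is the combinatorial middle step: justifying the renewal-type comparison between the ordered sequence of pivotals and a sum of independent geometric-like increments. This requires setting up the sausage decomposition precisely, conditioning sequentially on the position of the $k$-th pivotal so that the remaining exploration takes place in an independent residual configuration on a translated half-space, and then carefully applying BK to replace the residual connection probabilities by $g_p(\cdot)$-type quantities. Getting the bound in a form that can be integrated (rather than only summed) to yield exponential rather than merely power-law decay is the delicate point of the proof.
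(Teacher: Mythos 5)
Your outline is exactly the Men'shikov--Grimmett argument that this theorem cites (\cite[Theorem (5.4)]{Grim}) and that Section \ref{prflemMen} of this paper adapts to the hyperbolic setting: Russo's formula for the pivotal count, the sausage decomposition combined with the BK inequality, the renewal/Wald comparison, and integration of the resulting differential inequality over an interval of parameters --- so the approach is the same and the structure is correct. The one place your sketch understates the difficulty is the passage from $g_{p'}(k)\to 0$ to $\sum_k g_{p'}(k)<\infty$: this is not a single ``standard preliminary application'' of the differential inequality but the two-stage bootstrap of \cite[Lemma (5.24)]{Grim} (the analogue here is Lemma \ref{gt<=sqrt}), which first extracts the bound $g_{p'}(n)\le\delta/\sqrt{n}$ via a recursive choice of parameters $(p_i,n_i)$ arranged so that $g_{i+1}\le g_i^2$, and only then yields stretched-exponential decay and hence summability, after which the final integration across $[p,p']$ produces a genuine exponential rate.
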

The idea (of a bit more general theorem) comes from \cite{Men}, where a sketch of proof is given, and a detailed proof of the above statement is present in \cite{Grim}.

We adapt the idea of this theorem to the percolation process on $\G\hR\cap L$ in Theorem \ref{lemMen} and Lemma \ref{corMen}, appropriately rewriting the proof in \cite{Grim}, which is going to be the key part of the proof of main theorem.
In order to consider such counterpart of the above theorem, we define a kind of tail of all the distributions of the cluster size in $\G\hR\cap L$ for $(h,R)\in\IOd$ as follows:

\begin{df}\label{gpr}
Let $\pbd$ be the Euclidean orthogonal projection from $\Hd$ onto $\hbd\Hd$ and for any $x,y\in\Hd$,
$$d_\hbd(x,y)=\|\pbd(x)-\pbd(y)\|_\infty,$$
where $\|\cdot\|_\infty$ is the maximum (i.e.\ $l^\infty$) norm on $\hbd\Hd=\R^{d-1}\times\{0\}$.
Then, for $r>0$ and $x\in\Hd$, let
$$B_r(x)=\{y\in\Hd:d_\hbd(x,y)\le r\}\quad\textrm{and}\quad S_r(x)=\bdt B_r(x)$$
and for $h>0$, put
$$B_r^h(x)=B_r(x)\cap L^h.$$
If $x=o$ (or, more generally, if $\pbd(x)=\pbd(o)$), then we omit ``$(x)$''.
At last, for $p\in[0;1]$ and $r>0$, let
$$g_p(r)=\sup_{(h,R)\in\IOd}\Pr_p( \ho\conn S_r\textrm{ in }\G\hR\cap L).$$
\end{df}
\begin{rem}
In the Euclidean geometry, $B_r(x)$ and $B_r^h(x)$ are just cuboids of dimensions $r\times\ldots\times r\times\infty$ (unbounded in the direction of $d$-th axis) and $r\times\ldots\times r\times h$, respectively (up to removal of the face lying in $\hbd\Hd$).
\end{rem}
The condition ``$p<p_c(\mathbb{Z}^d)$'' in Theorem \ref{thm5.4} is going to be replaced by ``$p<p_0$'', which is natural because of the remark below. Before making it, we introduce notation concerning the percolation clusters:
\begin{denot}\label{dfMhRL}
For $\Phi\in\Isom(\Hd)$ and $v\in V(\G^\Phi)$ and a set $A\subseteq\Hd$ from the algebra generated by the convex sets, let $C^\Phi(v)$ and $C_A^\Phi(v)$ be the clusters of $v$ in $\G^\Phi$ and $\G^\Phi\cap A$, respectively, in the percolation configuration. Similarly, for $(h,R)\in\IOd$ and $\Phi=\Phi\hR$, we use notations $C\hR(v)$ and $C\hR_A(v)$, respectively.
\par If $v=\Phi(o)$, we omit ``$(v)$'' for short.
\end{denot}
\begin{rem}\label{rembdd}
If $p\in\NP$, then for any $\Phi\in\Isom(\Hd)$, the cluster $C^\Phi$ is $\Pr_p$-a.s.~bounded in the Euclidean metric. The reason is as follows. Take any $p\in\NP$ and $\Phi\in\Isom(\Hd)$. Then, for any $x\in\bdi\Hd$, we have $x\notin \c C(o)$ $\Pr_p$-a.s.~as well as $x\notin \c C^\Phi$ $\Pr_p$-a.s.
If we choose $x=\infty$ (for our half-space model of $\Hd$), then $\c C^\Phi$ is $\Pr_p$-a.s.~a compact set in $\hcHd$, so $C^\Phi$ is bounded in the Euclidean metric.
\end{rem}

Now, we formulate the theorem which is the counterpart of Theorem \ref{thm5.4}. Its proof (based on that of \cite[Theorem (5.4)]{Grim}) is deferred to Section \ref{prflemMen}.
\begin{thm}[exponential decay of $g_p(\cdot)$]\label{lemMen}\label{LEMMEN}
Let a graph $G$ embedded in $\Hd$ be connected, locally finite, transitive under isometries and let its edges be geodesic segments (as in Assumption \ref{assmG}). Then, for any $p<p_0$, there exists $\psi=\psi(p)>0$ such that~for any $r>0$,
$$g_p(r)\le e^{-\psi r}.$$
\end{thm}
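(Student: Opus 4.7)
The plan is to follow the Menshikov--Grimmett strategy for exponential decay of connection probabilities below a percolation threshold, adapted to the percolation process $\Phi\hR[\omega]\cap L$. The starting point is the weaker statement that $g_p(r)\to 0$ as $r\to\infty$ for $p<p_0$: this is the content of Claim \ref{Mtight} and is ultimately a restatement of the hypothesis that no point of $\bdi\Hd$ lies in $\bdi C(o)$ with positive probability. The task is to upgrade ``$g_p(r)\to 0$'' into the exponential bound ``$g_p(r)\le e^{-\psi r}$'' uniformly in $(h,R)\in\IOd$.

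To do so, write $\pi_r\hR(p):=\Pr_p(\ho\conn S_r\text{ in }\G\hR\cap L)$ and apply Russo's formula:
$$\frac{d}{dp}\pi_r\hR(p)=\sum_e \Pr_p(e\text{ is pivotal for }\{\ho\conn S_r\}),$$
where the sum runs over edges of $\G\hR$ intersecting $L$. For each pivotal edge $e=\{u,v\}$, the BK inequality splits the connection event into two disjointly occurring sub-connections, $\ho\conn u$ and $v\conn S_r$. The key point, which makes the function $g_p$ the right object to iterate against, is that the graph $\G$ is transitive under isometries of $\Hd$: one can apply an isometry $\Psi\in\Isom(\Hd)$ sending $v$ to a vertex of the form $o^{(h',R')}$ with $h'\in(0;1]$, in which case the sub-connection $v\conn S_r$ in $\G\hR\cap L$ transforms into a connection in $\G^{(h',R')}\cap L$ from $o^{(h',R')}$ to (a translate of) $S_{r-s}$, where $s$ is the $l^\infty$-horizontal distance already traveled. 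By the supremum in the definition of $g_p$, every such subprocess is bounded by $g_p(r-s)$.

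Having encoded the pivotal structure in terms of $g_p$ itself, one then runs Menshikov's counting argument: along any radial trajectory from $\ho$ to $S_r$, one partitions the $r$ horizontal units into blocks of length $L$ (to be chosen with $g_p(L)$ small) and shows, using the previous step and BK inductively, that the number of pivotal edges stochastically dominates a sum of roughly $r/L$ Bernoulli variables with parameter bounded away from $0$. This turns Russo's formula into a differential inequality of the form
$$\frac{d}{dp}\log\pi_r\hR(p)\ge c(p)\cdot r\quad\text{for all large }r\text{ uniformly in }(h,R),$$
and integration over a short parameter interval $[p,p']\subseteq[0,p_0)$ forces $\pi_r\hR(p)$, and hence $g_p(r)$, to decay exponentially in $r$.

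The main obstacle I expect is geometric bookkeeping. Unlike $\Z^d$, the graphs $\G\hR\cap L^h$ are not mutually translates of one another: the isometry $\Psi$ used to restart at a pivotal vertex rescales both vertical and horizontal distances, and $L$ is not preserved by $\Psi$ in general. This is precisely why $g_p$ is defined as a supremum over the full family $(h,R)\in\IOd$ and why the balls $B_r$ are taken with respect to the horizontal $l^\infty$-metric $d_\hbd$: these choices are compatible with the scaling $y\mapsto k\cdot y$ that conjugates $\G\hR$ to $\G^{(kh,R)}$ and map sub-balls of $B_r$ to analogous cuboids $B_{r'}$. A secondary difficulty is the boundary effect at $\hbd\Hd$: edges of $\G\hR$ may cross $\hbd L^h$ and produce trimmed ``half-edges'' whose states are coupled through a common underlying edge of $\G\hR$. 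Following the convention in the remark preceding the theorem, one applies BK to the original edges of $\G\hR$ intersecting $L$ rather than to their trimmed versions, preserving independence; verifying that this substitution is compatible with the pivotal-edge accounting is the delicate technical point of the argument.
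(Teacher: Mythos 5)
Your overall framework --- Russo's formula, the BK inequality, the chain-of-pivotal-edges (``sausage'') picture, and the use of the supremum over $(h,R)\in\IOd$ together with scaling isometries to restart the process at a pivotal vertex --- matches the paper's, and you correctly identify Claim \ref{Mtight} as the only place where $p<p_0$ enters. The genuine gap is the step where you claim that, once $g_p(L)$ is small for a single block length $L$, the number of pivotal edges stochastically dominates a sum of order $r/L$ Bernoulli variables with parameter bounded away from $0$, yielding $\frac{\ud}{\ud p}\log\pi_r\hR(p)\ge c(p)\,r$. This does not follow. What the sausage decomposition actually yields (Lemmas \ref{saus} and \ref{lemEN>=frac}) is
$$\Est_p(N\mid A)\;\ge\;\frac{r}{a+\int_0^r\gt_p(m)\,\ud m}-1,$$
via a renewal/Wald argument with i.i.d.\ variables $M_i$ whose tail is $\gt_p$. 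Knowing only $\gt_p(L)\le\e$ for one $L$ gives $\int_0^r\gt_p\le L+\e r$, hence a lower bound on $\Est_p(N\mid A)$ that stays \emph{bounded} (of order $1/\e$) as $r\to\infty$: a single long sausage can swallow arbitrarily many of your blocks, and the probability that all of $\rho_1,\dots,\rho_k$ are short only gives $(1-\e)^k$. A linear-in-$r$ lower bound requires $\int_0^\infty\gt_p<\infty$, and upgrading ``$g_p(r)\to 0$'' to integrability is the hard core of Menshikov's theorem, which your proposal omits entirely.

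The paper supplies this by a bootstrap you would need to reproduce: first the functional inequality \eqref{fctineq(gt)1}, $\gt_\al(r)\le\gt_\b(r)\exp\bigl(-(\b-\al)\bigl(r/(a+\int_0^r\gt_\b)-1\bigr)\bigr)$; then Lemma \ref{gt<=sqrt}, which iterates it along recursively chosen sequences $r_{i+1}=r_i/g_i$, $p_{i+1}=p_i-3g_i(1-\ln g_i)$ (spending only a summable amount of the parameter interval $(p,p_0)$) to force $g_{i+1}\le g_i^2$ and conclude $\gt_p(r)\le\d(p)/\sqrt r$; this makes $\int_0^r\gt_\b=O(\sqrt r)$, whence $\gt_\al(r)\le e^{-C\sqrt r}$, whence integrability, whence the exponential bound with a third parameter value. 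Two smaller omissions: Russo's formula and BK require events depending on finitely many edges, which is why the paper works in the truncated slabs $L_\d=\R^{d-1}\times[\d;1]$ and only afterwards lets $\d\to0$; and the renewal comparison needs the left-continuous regularization $\gt_p$ of $g_p$, since $1-g_p$ need not itself be a distribution function. Your geometric remarks (the role of the supremum over $(h,R)$, the $l^\infty$ projection metric, and the treatment of edges crossing $\bdt L$) are consistent with the paper.
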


The next lemma is a stronger version of the above one, where we take the union of all the clusters meeting some $\Bo$ instead of the cluster of $\ho$ in $\G\hR\cap L$. In other words, here the role of $\ho$ played in Theorem \ref{lemMen} is taken over by its thickened version $\Bo\cap V(\G^\Phi)$ for any $\Phi\in\Isom(\Hd)$. That leads to the following notation:
\begin{denot1}
We put
$$\bo_\Phi=\Bo\cap V(\G^\Phi)$$
for $\Phi\in\Isom(\Hd)$.
\end{denot1}

\begin{df}
For any $C\subseteq\Hd$, we define its \emd{size} by
$$r(C)=\sup_{x\in C}d_\hbd(o,x).$$
\end{df}

\begin{lem}\label{corMen}
Let a graph $G$ embedded in $\Hd$ be connected, locally finite, transitive under isometries and let its edges be geodesic segments (as in Assumption \ref{assmG}). Then, for any $p$ such that~the conclusion of Theorem \ref{lemMen} holds (in particular, for $p<p_0$) and for any $r_0>0$, there exist $\al=\al(p,r_0),\phi=\phi(p,r_0)>0$ such that~for any $r\ge 0$,
\begin{equation}\label{ineqcorMen}
\sup_{\Phi\in\Isom(\Hd)} \Pr_p(r(\bigcup_{v\in\bo_\Phi} C_L^\Phi(v))\ge r) \le \al e^{-\phi r}.
\end{equation}
\end{lem}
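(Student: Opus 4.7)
The plan is to reduce, vertex by vertex, to Theorem \ref{lemMen} via an isometric change of coordinates, and then combine via a union bound over $\bo_\Phi$, with a dyadic decomposition by height to control the cardinality.

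For each $v\in\bo_\Phi$ of height $h_v\in(0,1]$, the Euclidean horizontal translation $T_v\colon(x,t)\mapsto(x-\pbd(v),t)$ is a hyperbolic isometry preserving the slab $L$ and sending $v$ to $(0,\ldots,0,h_v)$. Composing it with an isometric graph automorphism $\theta$ of $\G$ satisfying $\theta(o)=\Phi^{-1}(v)$ (supplied by transitivity of $\G$), the isometry $T_v\circ\Phi\circ\theta^{-1}$ sends $o$ to $(0,\ldots,0,h_v)$ and carries $\G$ onto $T_v[\G^\Phi]$; since its derivative at $o$ has the form $h_vR_v$ for some $R_v\in O(d)$, this composition coincides with $\Phi^{(h_v,R_v)}$, whence $T_v[\G^\Phi]=\G^{(h_v,R_v)}$ as embedded graphs. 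Consequently, the law of $C_L^\Phi(v)$ matches that of $C_L^{(h_v,R_v)}(o^{(h_v,R_v)})$.

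Because $\|\pbd(v)\|_\infty\le r_0$, the triangle inequality for $d_\hbd$ gives that $\{r(C_L^\Phi(v))\ge r\}$ forces the cluster of $o^{(h_v,R_v)}$ in $\G^{(h_v,R_v)}\cap L$ to reach $S_{r-r_0}$. Theorem \ref{lemMen} then yields, for $r\ge r_0$,
$$\Pr_p(r(C_L^\Phi(v))\ge r)\le g_p(r-r_0)\le e^{-\psi(r-r_0)},$$
with $\psi=\psi(p)>0$. A union bound gives
$$\Pr_p\Bigl(r\bigl(\bigcup_{v\in\bo_\Phi}C_L^\Phi(v)\bigr)\ge r\Bigr)\le|\bo_\Phi|\cdot e^{-\psi(r-r_0)}.$$

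The hard part will be controlling $|\bo_\Phi|$ uniformly in $\Phi\in\Isom(\Hd)$: since $\Bo$ has infinite hyperbolic volume (it touches $\hbd\Hd$), one can have $|\bo_\Phi|=\infty$ when $V(\G^\Phi)$ accumulates near $\hbd\Hd$ inside $\Bo$. I would address this by stratifying $\bo_\Phi=\bigcup_{n\ge0}\bo_\Phi^{(n)}$ with $\bo_\Phi^{(n)}=\bo_\Phi\cap(L^{2^{-n}}\setminus L^{2^{-(n+1)}})$, each of cardinality $O((2^nr_0)^{d-1})$ by the uniform minimum hyperbolic spacing of $V(\G^\Phi)$, and refining the single-vertex estimate for $v\in\bo_\Phi^{(n)}$ via a dichotomy: if the cluster of $v$ stays inside $L^{2^{-n}}$, then the hyperbolic scaling $y\mapsto 2^ny$ brings the event into the scope of Theorem \ref{lemMen} at the enhanced horizontal scale $2^n(r-r_0)$, yielding a bound $e^{-\psi\cdot 2^n(r-r_0)}$ that absorbs the polynomial count; otherwise the cluster leaves $L^{2^{-n}}$, in which case its passage through a vertex at height exceeding $2^{-n}$ allows the contribution to be folded into the estimate for a shallower stratum $\bo_\Phi^{(m)}$ with $m<n$. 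A geometric sum over $n$ should then produce the claimed uniform estimate $\al(p,r_0)e^{-\phi(p,r_0)r}$.
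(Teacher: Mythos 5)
Your overall strategy---reduce each vertex to $g_p$ by an isometric change of coordinates, stratify by dyadic height, and use a rescaled bound of the form $e^{-\psi 2^n(r-r_0)}$ to absorb a vertex count that grows like $(2^n)^{d-1}$---is essentially the strategy of the paper, and your first branch (cluster confined to $L^{2^{-n}}$) is sound. The genuine gap is the second branch of your dichotomy. When the cluster of $v\in\bo_\Phi^{(n)}$ escapes upward out of $L^{2^{-n}}$, the vertex $w$ of greater height witnessing the escape need not lie in $\Bo$ at all: it can sit at $d_\hbd$-distance up to roughly $r$ from $o$, and $v$ may be the \emph{only} vertex of its cluster inside $\Bo$. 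So the contribution cannot be ``folded into a shallower stratum $\bo_\Phi^{(m)}$ with $m<n$''---there may be no shallower vertex of that cluster in $\Bo$---and as stated this branch does not close; it is precisely the hard case. A smaller issue in the same place: the highest point of a connecting path can lie in the interior of an edge (edges are semicircular geodesics in the half-space model and can arch above both endpoints), so even after locating the highest \emph{vertex} one needs a uniform constant controlling the ratio of heights along a single edge before the confinement-plus-rescaling step applies.

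The paper's resolution is to abandon indexing by the starting vertices in $\bo_\Phi$ and instead run the union bound over the highest vertex $\vh$ visited by some open path from $\Bo$ to $S_r$. This vertex ranges over all of $V(\G^\Phi)\cap B_r^1$, whose $n$-th dyadic height stratum has cardinality $O\((2^n)^{d-1}r^{d-1}\)$---still polynomial in $2^n$ and in $r$, hence still absorbed by the superexponential gain from rescaling. The bounded hyperbolic length of edges (from transitivity under isometries) supplies a constant $H\ge 1$ such that the whole path lies in $L^{Hh(\vh)}$, and the triangle inequality gives $\vh\conn S_{(r-r_0)/2}(\vh)$ there; applying the isometry $\frac{1}{Hh(\vh)}(\cdot-\pbd(\vh))$ then bounds each term by $g_p\(\frac{r-r_0}{2Hh(\vh)}\)$, and the dyadic summation you envisage goes through. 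If you replace your ``fold into a shallower stratum'' step by this re-indexing over $B_r^1$ and insert the constant $H$, your argument becomes the paper's proof; the remaining pieces of your write-up (the single-vertex reduction to $\Phi^{(h_v,R_v)}$, the spacing bound on the strata, the conversion between $r(\cdot)\ge r$ and reaching $S_{r-\e}$) are correct modulo routine details.
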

\begin{proof}
First, note that it is sufficient to prove the inequality
\begin{equation}
\label{ineqcorMen<->}
\sup_{\Phi\in\Isom(\Hd)} \Pr_p(\bo_\Phi \conn S_r\textrm{ in }\G^\Phi\cap L) \le \al e^{-\phi r}
\end{equation}
for $r$ greater than some fixed $r_1>0$ in place of \eqref{ineqcorMen}. Indeed, suppose there exist $\al,\phi>0$ such that \eqref{ineqcorMen<->} holds for all $r>r_1$. We then have:
\begin{itemize}
\item for any $r>r_1$ and $\e\in(0;\min(r-r_1,1))$,
\begin{align}
\sup_{\Phi\in\Isom(\Hd)} \Pr_p(r(\bigcup_{v\in\bo} C_L^\Phi(v))\ge r) &\le \sup_{\Phi\in\Isom(\Hd)} \Pr_p(\bo_\Phi \conn S_{r-\e}\textrm{ in }\G^\Phi\cap L) \le\\
&\le \al e^{-\phi(r-\e)} \le (\al e^{\phi})e^{-\phi r},
\end{align}
\item for $r\le r_1$, the left-hand side of \eqref{ineqcorMen} is less than or equal to $1 \le e^{\phi r_1}e^{-\phi r}$.
\end{itemize}
So then we will get the lemma for any $r\ge 0$ with $\max(e^{\phi r_1},\al e^{\phi})$ put in place of $\al$.

\par Now, we prove \eqref{ineqcorMen<->} (we pick $r_1$ as above later): let $r>r_0$ and $\Phi\in\Isom(\Hd)$. The task is to pick appropriate values of $\al$ and $\phi$ independently of $r$ and $\Phi$.
\begin{df}
Put $\bo=\bo_\Phi$. For $x\in\Hd\subseteq\R^d$, let $h(x)$ denote the $d$-th coordinate of $x$ (or: Euclidean distance from $x$ to $\hbd\Hd$), which we call \emd{height} of $x$.
\end{df}
Assume for a while that $\bo\conn S_r$ in $\G^\Phi\cap L$ (note that this event may have probability $0$, e.g.~when $\bo=\emptyset$).
Consider all open paths 
in $\G^\Phi\cap L$ joining $\bo$ to $S_r$ and consider all the vertices of $G\Phi$ visited by those paths, lying in $B_r^1$. There is a non-zero finite number of vertices of maximal height among them because $\G^\Phi$ is locally finite. Choose one of these vertices at random and call it $\vh$. This $\Hd$-valued random variable is defined whenever $\bo\conn S_r$ in $\G^\Phi\cap L$.)
\begin{obs}
There exists $H\ge 1$ such that a.s. if $\vh$ is defined, then $\vh \conn S_\frac{r-r_0}{2}(\vh)$ in $\G^\Phi\cap L^{Hh(\vh)}$.
\end{obs}
\begin{proof}
 Assume that $\vh$ is defined and take a path $P$ joining $\bo$ to $S_r$ passing through $\vh$. Hyperbolic lengths of edges in $\G^\Phi$ are bounded from above (by the transitivity of $\G^\Phi$ under isometries).
That implies that for any edge of $\G^\Phi$ the ratio between the heights of any two of its points is also bounded from above by some constant $H\ge 1$ (it is going to be the $H$ in the observation). (The reason for that are the two following basic properties of the half-space model of $\Hd$:
\begin{itemize}
\item The heights of points of any fixed hyperbolic ball (of finite radius) are bounded from above and from below by some positive constants.
\item Any hyperbolic ball can be mapped onto any other hyperbolic ball of the same radius by a translation by vector from $\R^{d-1}\times\{0\}$ composed with a linear scaling of $\R^d$.
\end{itemize}
That implies that the path $P\subseteq L^{Hh(\vh)}$.
\par Now, because $P$ contains some points $x\in\Bo$ and $y\in S_r$ and, by triangle inequality, $d_\hbd(x,y)\ge r-r_0$, it follows that $d_\hbd(\vh,x)$ or $d_\hbd(\vh,y)$ is at least $\frac{r-r_0}{2}$ (again by triangle inequality). Hence, $P$ intersects $S_\frac{r-r_0}{2}(\vh)$, which finishes the proof.
\end{proof}
Based on that observation, we estimate:
\begin{align}
\Pr_p(\bo\conn S_r\textrm{ in }\G^\Phi\cap L) \label{Pbo<->Sr}\tag{$\ast$}
&= \quad\voidindop{\sum}{v\in V(\G^\Phi)\cap B_r^1}\quad \Pr_p(\vh\textrm{ is defined and }\vh=v) \le\\
&\le \quad\voidindop{\sum}{v\in V(\G^\Phi)\cap B_r^1}\quad \Pr_p(v\conn S_\frac{r-r_0}{2}(v)\textrm{ in }\G^\Phi\cap L^{Hh(v)}) =\\
&= \quad\voidindop{\sum}{v\in V(\G^\Phi)\cap B_r^1}\quad \Pr_p\Bigl(\textstyle\frac{1}{H}\cdot o\conn S_\frac{r-r_0}{2Hh(v)}\left(\frac{1}{H}\cdot o\right)\textrm{ in }
\frac{1}{Hh(v)}(\G^{\Phi} - \pbd(v))\cap L^1\Bigr),
\end{align}
by mapping the situation via the (hyperbolic) isometry $\frac{1}{Hh(v)}(\cdot - \pbd(v))$ for each $v$.
Note that because for $v\in V(\G^\Phi)\cap B_r^1$, $\frac{1}{H}\cdot o$ indeed is a vertex of $\frac{1}{Hh(v)}(\G^{\Phi} - \pbd(v))$, by the transitivity of $\G$ under isometries, we can replace the isometry $\frac{1}{Hh(v)}\(\cdot-\pbd(v)\)$ with an isometry giving the same image of $\G$ and mapping $o$ to $\frac{1}{H}\cdot o$, hence of the form $\Phi^{(1/H,R)}$. That, combined with the assumption on $p$ (the conclusion of Theorem \ref{lemMen}), gives
\begin{equation}\label{Pbo<->Sr<exp}
\eqref{Pbo<->Sr} \le \sum_{v\in V(\G^\Phi)\cap B_r^1} g_p\(\frac{r-r_0}{2Hh(v)}\)
\le \sum_{v\in V(\G^\Phi)\cap B_r^1} e^{-\psi\frac{r-r_0}{2Hh(v)}},
\end{equation}
where $\psi$ is as in Theorem \ref{lemMen}.
\par Because $B_r^1 = [-r;r]^{d-1}\times(0;1]$, one can cover it by $\left\lceil\frac{r}{r_0}\right\rceil^{d-1}$ translations of $\Bo$ by vectors from $\R^{d-1}\times\{0\}$. So, let $\{\Bo(x_i):i=1,\ldots,\left\lceil\frac{r}{r_0}\right\rceil^{d-1}\}$ be such covering. Moreover, each $\Bo(x_i)$ can be tesselated by infinitely many isometric (in the hyperbolic sense) copies of $K=\Bo\sm L^{\frac{1}{2}}$, more precisely, by: a translation of $K$, $2^{d-1}$ translations of $\frac{1}{2}K$,  $(2^{d-1})^2$ translations of $\frac{1}{2^2}K$, etc., all along $\R^{d-1}\times\{0\}$. Let $U=\sup_{\phi\in\Isom(\Hd)}\#(V(\G^\Phi)\cap\phi[K])$ ($U<\infty$ by local finiteness of $\G$). Then, splitting the sum from \eqref{Pbo<->Sr<exp} according to those tesselations,
\begin{align}
\eqref{Pbo<->Sr} &\le \sum_{i=1}^{\left\lceil\frac{r}{r_0}\right\rceil^{d-1}} \sum_{v\in V(\G^\Phi)\cap B_{r_0}^1(x_i)} e^{-\psi\frac{r-r_0}{2Hh(v)}} \le\\
&\le \left\lceil\frac{r}{r_0}\right\rceil^{d-1} \sum_{k=0}^\infty (2^{d-1})^k U\sup_{h\in[\frac{1}{2^{k+1}};\frac{1}{2^k}]} e^{-\psi\frac{r-r_0}{2Hh}} \le\\
&\le U\left\lceil\frac{r}{r_0}\right\rceil^{d-1} \sum_{k=0}^\infty (2^{d-1})^k e^{-\frac{\psi}{H}2^{k-1}(r-r_0)} =\label{smallinfsum}\\
&= U\left\lceil\frac{r}{r_0}\right\rceil^{d-1} \sum_{k=0}^\infty e^{\ln 2\cdot k(d-1) - \frac{\psi}{H}2^{k-1}(r-r_0)}.
\end{align}
Now, we are going to show that the above bound is finite and tends to $0$ at exponential rate with $r\to\infty$. First, we claim that there exists $k_0\in\N$ such that
\begin{equation}\label{k_0cond}
(\forall k\ge k_0)(\forall r\ge 2r_0) \left(\ln2\cdot k(d-1) - 2^{k-1} \frac{\psi}{H}(r-r_0) \le -kr\right).
\end{equation}
Indeed, for sufficiently large $k$ we have $2^{k-1}\frac{\psi}{H}-k>0$, so for $r\ge 2r_0$
\begin{equation}
\left(2^{k-1}\frac{\psi}{H}-k\right)r \ge \left(2^{k-1}\frac{\psi}{H}-k\right)\cdot2r_0
\end{equation}
and
\begin{equation}
2^{k-1}\frac{\psi}{H}(r-r_0) - kr \ge 2^{k-1}\frac{\psi}{H}r_0 - 2kr_0 \ge k(d-1)\ln2
\end{equation}
for sufficiently large $k$. So, let $k_0$ satisfy \eqref{k_0cond}. Then, for $r\ge 2r_0$,
\begin{align}
\eqref{Pbo<->Sr} &\le U\left\lceil\frac{r}{r_0}\right\rceil^{d-1} \(\sum_{k=0}^{k_0-1} (2^{d-1})^k e^{-2^{k-1} \frac{\psi}{H}(r-r_0)} + \sum_{k=k_0}^\infty e^{-kr}\) \le\\
&\le U\left\lceil\frac{r}{r_0}\right\rceil^{d-1} \biggl(k_0(2^{d-1})^{k_0-1} e^{-\frac{\psi}{2H}(r-r_0)} + e^{-k_0r}\underbrace{\frac{1}{1-e^{-r}}}_{\le \frac{1}{1-e^{-2r_0}}}\biggr) \le\\
&\le U\left\lceil\frac{r}{r_0}\right\rceil^{d-1} (De^{-Er})
\end{align}
for some constants $D,E>0$. If we choose $r_1\ge 2r_0$ such that
$$(\forall r\ge r_1)\left( \left\lceil\frac{r}{r_0}\right\rceil^{d-1} \le e^\frac{Er}{2}\right)$$
(which is possible), then
$$\eqref{Pbo<->Sr}\le UDe^\frac{-Er}{2}\quad\textrm{for }r\ge r_1,$$
which finishes the proof of the lemma.
\end{proof}

\section{Scaling---proof of the main theorem}\label{secprfmainthm}

Now we complete the proof of the main theorem:
\begin{thm*}[recalled Theorem \ref{mainthm}]
Let $G$ satisfy the Assumption \ref{assmG}. Then, for any $0\le p<p_0$, a.s. every cluster in $p$-Bernoulli bond percolation\ on $\G$ has only one-point boundaries of ends.
\end{thm*}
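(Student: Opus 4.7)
The plan is to reduce the statement to a quantitative small-diameter estimate at infinity, and then apply Lemma~\ref{corMen} through a covering of $\Hd$ by isometric copies of the reference box $B^1_{r_0}$. First, by $\s$-additivity over the countable set $V(\G)$ combined with the transitivity of $\G$ under isometries, it suffices to show that $\Pr_p$-a.s.\ every end of $C(o)$ has a singleton boundary. Since $\widehat{e(K)}$ is a decreasing family of nonempty compacta in $\cHd$ as $K$ exhausts $\Hd$, and since a decreasing intersection of compacta of Euclidean diameter (in the Poincar\'e disc model) $\ge\e$ still has Euclidean diameter $\ge\e$, an end $e$ has $|\bdi e|=1$ iff $\mathrm{diam}^E(\widehat{e(K)})\to 0$. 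A further $\s$-additivity over rational $\e>0$ lets me fix $\e>0$ and aim to prove $\Pr_p(B_\e)=0$, where $B_\e$ is the event that some end $e$ of $C(o)$ satisfies $\mathrm{diam}^E(\widehat{e(K)})\ge\e$ for every compact $K$. Restricting $K$ to closed hyperbolic balls $\overline{B_\Hd(o,R)}$ and using monotonicity, this reduces to $\Pr_p(A_R)\to 0$, where
\begin{equation*}
A_R = \bigl\{\exists\text{ component } D \text{ of } C(o)\sm\overline{B_\Hd(o,R)} \text{ with } \mathrm{diam}^E(\widehat D)\ge\e\bigr\}.
\end{equation*}

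To bound $\Pr_p(A_R)$, I would fix a locally finite, bounded-multiplicity cover $\{T_n=\Phi_n[B^1_{r_0/2}]\}_{n\in\N}$ of $\Hd$ with $\Phi_n\in\Isom(\Hd)$ (possible by the transitivity of $\G$ and the scaling/translation structure of the half-space model). Every component $D$ realizing $A_R$ contains a vertex $v^*\in V(\G)$, and this vertex lies in some tile $T_n$ disjoint from $\overline{B_\Hd(o,R-1)}$. Let $R'_n$ be the hyperbolic distance from $o$ to $T_n$; then $R'_n\ge R-1$. Transporting by $\Phi_n^{-1}$, the point $\Phi_n^{-1}(v^*)$ sits in $\bo_{\Phi_n^{-1}}$, and the connected portion of $D$ lying in $\Phi_n[L]$ becomes a subset of $\bigcup_{w\in\bo_{\Phi_n^{-1}}}C_L^{\Phi_n^{-1}}(w)$. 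The core geometric step is that, for a suitable choice of $\Phi_n$ (one mapping a ``highest'' direction of $D$ to the vertical direction of the half-space model, so $D$ bulges downward in those coordinates), this transported piece has $d_\hbd$-size at least $c\e\, e^{R'_n}$ for a constant $c=c(\e)>0$. Indeed, two points of $\widehat D$ with disc-Euclidean distance $\ge\e$ are, by virtue of lying at hyperbolic distance $\ge R$ from $o$, in a disc-model shell of Euclidean thickness $\sim e^{-R}$; a M\"obius-map computation comparing the disc and half-space conformal factors converts this, after moving $v^*$ to a bounded region near $o$, into $d_\hbd$-distance of order $\e\, e^{R'_n}$.

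Applying Lemma~\ref{corMen} with $\Phi=\Phi_n^{-1}$ for each $n$ and taking a union bound,
\begin{equation*}
\Pr_p(A_R) \le \sum_n \Pr_p\Bigl(r\Bigl(\bigcup_{w\in\bo_{\Phi_n^{-1}}}C_L^{\Phi_n^{-1}}(w)\Bigr)\ge c\e e^{R'_n}\Bigr) \le \sum_n \alpha\, e^{-\phi c\e e^{R'_n}}.
\end{equation*}
By exponential volume growth of $\Hd$, the number of tiles $T_n$ with $R'_n\in[R+k,R+k+1]$ is at most $C_1 e^{(d-1)(R+k)}$ for some constant $C_1>0$, so
\begin{equation*}
\Pr_p(A_R) \le \sum_{k=0}^\infty C_1 e^{(d-1)(R+k)}\,\alpha\, e^{-\phi c\e e^{R+k}} \tends{R\to\infty} 0,
\end{equation*}
since double-exponential decay beats single-exponential growth. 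This would complete the proof.

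The main obstacle is the geometric step in paragraph two. One must verify that the disc-Euclidean lower bound on $\mathrm{diam}^E(\widehat D)$ genuinely forces $d_\hbd$-extent $\gtrsim\e\, e^{R'_n}$ in the $\Phi_n$-transported half-space picture, and that the isometry $\Phi_n$ can be chosen so that the relevant portion of $D$ actually sits in $\Phi_n[L]$ (and does not escape into the horoball $\Phi_n[\{y>1\}]$). Adapting $\Phi_n$ to $D$ — for instance by placing a highest point of $D$ in the half-space coordinates associated to $\Phi_n$ onto the top face of $B^1_{r_0/2}$ and then rotating so that a far endpoint of a disc-model diameter of $\widehat D$ lies roughly ``horizontally'' — is the natural device. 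Carrying out this reduction rigorously, and making the cover $\{T_n\}$ explicit via the $(h,R)$-parametrization of $\Isom(\Hd)$ together with a quasi-lattice on $\hbd\Hd$, constitutes the main technical content.
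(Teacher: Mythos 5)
Your high-level strategy is the same engine as the paper's: reduce to a quantitative bad event, bound it by Lemma \ref{corMen} applied to congruent copies of the reference box, and let the decay in Lemma \ref{corMen} beat the exponential number of boxes. But the step you yourself flag as the ``main obstacle'' is a genuine gap, and it is not merely computational. Two concrete problems. First, Lemma \ref{corMen} bounds a supremum of probabilities over isometries \emph{fixed in advance}; it says nothing about an isometry $\Phi_n$ chosen \emph{after} seeing the random component $D$ (``mapping a highest direction of $D$ to the vertical direction''). To run a union bound you must exhibit a fixed countable family of charts --- discretizing rotations as well as translations --- such that for every admissible $D$ some chart of the family works; this is exactly the content you defer. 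Second, even in a well-chosen chart, $\Phi_n^{-1}[D]\cap L$ need not be connected: $D$ may leave $\Phi_n[L]$ through the excluded horoball and re-enter, so the two points of $\c D$ witnessing $\mathrm{diam}^E(\c D)\ge\e$ can lie in components of $D\cap\Phi_n[L]$ other than the one containing $v^*$, hence outside $\bigcup_{w\in\bo_{\Phi_n^{-1}}}C_L^{\Phi_n^{-1}}(w)$, and the bound \eqref{ineqcorMen} then controls nothing relevant. A correct argument must guarantee that a \emph{single} cluster of $\G^{\Phi_n^{-1}}\cap L$ meeting the box carries the whole $d_\hbd$-extent of order $\e e^{R'_n}$.

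The paper closes both gaps by working in one fixed half-space chart throughout. Since $p<p_0$ gives $p\in\NP$, Remark \ref{rembdd} and transitivity imply that a.s.\ \emph{every} cluster is bounded in the Euclidean metric of that chart, so every end-boundary lies in the hyperplane $\hbd\Hd$ and ``diameter $\ge\d$'' is measured in the flat metric there --- no disc-to-half-space comparison is needed. The compact exhaustion is taken to be $\cl{C\sm L^h}$ rather than hyperbolic balls, so that $e(\cl{C\sm L^h})$ is \emph{by construction} a single cluster of $\G\cap L^h$ of projection diameter $\ge\d$ meeting the fixed box $B_r$; the dilation $x\mapsto 2^kx$ maps $L^{1/2^k}$ onto $L$ and multiplies projection diameters by $2^k$, supplying exactly the exponential gain you were trying to extract from the disc model; and the union bound runs over the $(2^k)^{d-1}$ deterministic translates of $B_r\cap L$ tiling $B_{2^kr}\cap L$. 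If you want to keep your ball-exhaustion formulation you would essentially have to reprove these reductions, so I recommend switching to the horoball exhaustion.
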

\begin{proof}[Proof of Theorem \ref{mainthm}]
Fix $p\in[0;p_0)$ and suppose towards a contradiction that with some positive probability there is some cluster with some end with a non-one-point boundary. Note that by Remark \ref{rembdd} and by the transitivity of $\G$ under isometries, for any $v\in V(\G)$ a.s.~$C(v)$ is bounded in the Euclidean metric, so, a.s.~all the percolation clusters in $\G$ are bounded in the Euclidean metric. Then, for some $\d>0$ and $r>0$, there exists with probability\ $a>0$ a cluster bounded in the Euclidean metric, with boundary of some end having Euclidean diameter greater than or equal to $\d$ and intersecting the open disc $\sint_{\hbd\Hd}\hbd B_r$.
Let $C$ and $e$ be such cluster and its end, respectively. Let for $A\subseteq\Hd$, the \emd{projection diameter} of $A$ be the Euclidean diameter of $\pbd(A)$. Then for $h>0$
\begin{itemize}
\item the set $\cl{C\sm L^h}$ is compact;
\item $e(\cl{C\sm L^h})$ is a cluster in the percolation configuration on $\G\cap L^h)$;
\item $e(\cl{C\sm L^h})$ has projection diameter at least $\d$ and intersects $B_r\cap V(\G)$.
\end{itemize}
All the above implies that for any $k\in\N$,
$$\Pr_p(\exists\textrm{ a cluster in $\G\cap L^\frac{1}{2^k}$ of projection diameter $\ge\d$ intersecting $B_r\cap V(\G)$})\ge a,$$
so, by scaling by $2^k$ in $\R^d$ (which is a hyperbolic isometry), we obtain
$$\Pr_p(\exists\textrm{ a cluster in $\G^{2^k\cdot}\cap L$ of projection diameter $\ge2^k\d$ intersecting $B_{2^kr}\cap V(\G^{2^k\cdot})$})\ge a$$
(where $\G^{2^k\cdot}$ is the image of $\G$ under the scaling). $B_{2^kr}\cap L$ is a union of $(2^k)^{d-1}$ isometric copies of $B_r\cap L$, so the left-hand side of above inequality is bounded from above by
\begin{multline}
(2^k)^{d-1}\sup_{\Phi\in\Isom(\Hd)} \Pr_p(\exists\textrm{ a cluster in $\G^\Phi\cap L$ of proj.\ diam.\ $\ge2^k\d$ intersecting $B_{r}\cap V(\G^\Phi)$}) \le\\
\le (2^k)^{d-1}\sup_{\Phi\in\Isom(\Hd)} \Pr_p\left(r\left(\bigcup_{v\in B_r^1\cap V(\G^\Phi)} C_L^\Phi(v)\right)\ge \frac{2^k\d}{2}\right)
\end{multline}
(because the size of a cluster is at least half its projection diameter),
so by Lemma \ref{corMen}, for any $k\in\N$,
$$a\le (2^k)^{d-1} \al e^{-\phi\d 2^{k-1}},$$
where $\al,\phi>0$ are constants (as well as $\d$, $a$ and $r$). But the right-hand side of this inequality tends to $0$ with $k\to \infty$, a contradiction.
\end{proof}

\section{Proof of the exponential decay}\label{prflemMen}
In this section, we prove Theorem \ref{lemMen}:
\begin{thm*}[recalled Theorem \ref{lemMen}]
Let a graph $G$ embedded in $\Hd$ be connected, locally finite, transitive under isometries and let its edges be geodesic segments (as in Assumption \ref{assmG}). Then, for any $p<p_0$, there exists $\psi=\psi(p)>0$ such that~for any $r>0$,
$$g_p(r)\le e^{-\psi r}.$$
\end{thm*}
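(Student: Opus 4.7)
The plan is to mimic the Menshikov--Grimmett proof of exponential decay of radial connection probabilities (\cite[Theorem (5.4)]{Grim}) in the present setting, with $g_p(r)$ playing the role of the sphere-reaching probability. As foreshadowed by the remark after Theorem \ref{mainthm}, the only property of the hypothesis $p<p_0$ that enters is the statement $g_p(r)\tends{r\to\infty}0$; everything else will be an adaptation of the argument of \cite{Grim} to the family of embedded graphs $\G\hR\cap L$, with the Euclidean box $S_r$ replacing the graph-distance sphere.

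\textbf{Step 1 (Tightness).} I would first establish the auxiliary claim \ref{Mtight}: for $p<p_0$,
$$g_p(r)\tends{r\to\infty}0.$$
This should follow from the definition of $\NP$ together with Remark \ref{rembdd}: if $p\in\NP$, then for every $\Phi\in\Isom(\Hd)$ the cluster $C^\Phi$ is $\Pr_p$-a.s.\ Euclidean-bounded, and a fortiori $\Pr_p(\ho\conn S_r\text{ in }\G\hR\cap L)\to 0$ for each $(h,R)$. The uniformity in $(h,R)\in\IOd$ is obtained by exploiting compactness of $O(d)$ and the hyperbolic scaling $y\mapsto h^{-1}y$, which brings the parameter $h$ back to $1$ at the cost of enlarging the radius.

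\textbf{Step 2 (Russo and Menshikov decomposition).} Fix $(h,R)\in\IOd$, $r>0$, and set $f_r(p):=\Pr_p(\ho\conn S_r\text{ in }\G\hR\cap L)$. After truncating to a finite subgraph and applying Russo's formula to the increasing event $\{\ho\conn S_r\}$,
$$\frac{df_r}{dp}=\sum_e\Pr_p\bigl(e\text{ is pivotal for }\{\ho\conn S_r\}\bigr).$$
Following Menshikov, any configuration in $\{\ho\conn S_r\}$ has an ordered sequence of pivotal edges $e_1,\ldots,e_N$ along any open path; a BK-type argument decomposes the event at the last pivotal $e_i$ into an ``inside'' piece (a connection from $\ho$ to the base of $e_i$) and an ``outside'' piece. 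The outside piece, after being re-centered at the far endpoint $v$ of $e_i$ by the hyperbolic isometry $y\mapsto h(v)^{-1}(y-\pbd(v))$ (as in the proof of Lemma \ref{corMen}), is of precisely the same form as a $g_p$-event and hence is bounded by $g_p$ of a smaller radius. Summing these BK bounds over pivotal positions and averaging, as in \cite[(5.18)--(5.21)]{Grim}, yields a Menshikov-type differential inequality
$$\frac{d}{dp}\log\frac{f_r(p)}{1-f_r(p)}\ \ge\ \frac{r}{M_p(r)},\qquad M_p(r):=\int_0^r g_p(s)\,ds.$$
Taking $\sup_{(h,R)}$ and integrating from $p$ to some fixed $p'\in(p,p_0)$ yields
$$g_p(r)\ \le\ \exp\!\left(-\frac{(p'-p)\,r}{M_{p'}(r)}\right).$$
Step 1 forces $M_{p'}(r)=o(r)$, so for $r$ large the exponent exceeds $\psi(p)r$ for an appropriate $\psi(p)>0$; adjusting constants to absorb the small-$r$ regime gives the stated bound.

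\textbf{Main obstacle.} The chief difficulties are that the graph $\G\hR\cap L$ is not fixed but varies with $(h,R)$, and that its edges are not independent (being fragments of the common edges of $\G\hR$). I would handle the dependence issue by working throughout with the edges of $\G\hR$ that hit $L$ rather than with their intersections, so that the BK inequality applies verbatim. The variation in $(h,R)$ I would handle by making the pivotal decomposition scale-covariant via the isometries $y\mapsto h(v)^{-1}y$ at each pivotal vertex $v$, exactly in the spirit of Lemma \ref{corMen}; the uniformity in $(h,R)$ built into the definition of $g_p$ is then precisely what makes the Menshikov induction close. Verifying the combinatorial bookkeeping of the pivotal decomposition in this continuous/hyperbolic setting, rather than on the integer lattice, is where most of the work in Section \ref{prflemMen} is to be spent.
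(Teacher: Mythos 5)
Your Steps 1 and 2 follow the same route as the paper (the tightness claim, Russo's formula on truncated events, the Menshikov pivotal decomposition with BK, and the re-centering isometries), but your final deduction contains a genuine gap. From the integrated inequality
\begin{equation*}
g_p(r)\le\exp\left(-\frac{(p'-p)\,r}{M_{p'}(r)}\right),\qquad M_{p'}(r)=\int_0^r g_{p'}(s)\,ds,
\end{equation*}
the conclusion $g_p(r)\le e^{-\psi r}$ requires $M_{p'}(r)$ to be \emph{bounded} in $r$, i.e.\ $\int_0^\infty g_{p'}(s)\,ds<\infty$. Your claim that Step 1 ``forces $M_{p'}(r)=o(r)$, so for $r$ large the exponent exceeds $\psi(p)r$'' does not hold: $M_{p'}(r)=o(r)$ only makes the exponent $\frac{(p'-p)r}{M_{p'}(r)}$ tend to infinity, not grow linearly. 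For instance, if $g_{p'}(s)\sim s^{-1/2}$ then $M_{p'}(r)\sim 2\sqrt r=o(r)$, yet the exponent is only of order $\sqrt r$ and you obtain a stretched-exponential bound, not the stated one.

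Closing this gap is the heart of Menshikov's argument and occupies most of Section \ref{prflemMen}. The paper first proves Lemma \ref{gt<=sqrt} ($\gt_p(r)\le\d(p)/\sqrt r$ for $p<p_0$) by a bootstrap: one builds recursive sequences $p_1>p_2>\cdots>p$ and $r_1\le r_2\le\cdots$ via $r_{i+1}=r_i/g_i$ and $p_{i+1}=p_i-3g_i(1-\ln g_i)$ with $g_i=\gt_{p_i}(r_i)$, shows $g_{i+1}\le g_i^2$ from the functional inequality, and uses the tightness of Step 1 only to choose $g_1$ so small that the total loss $\sum_i 3g_i(1-\ln g_i)$ of parameter stays below $p_1-p$. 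The resulting $1/\sqrt r$ bound gives $M_{p'}(r)=O(\sqrt r)$, which, fed back into the functional inequality, yields $\gt_\al(r)\le e\cdot e^{-C\sqrt r}$ and hence $\int_0^\infty\gt_\al<\infty$; only then does one final application of the inequality produce a truly linear exponent. Without this two-stage self-improvement your argument cannot reach exponential decay. A secondary point: the event $\{\ho\conn S_r\textrm{ in }\G\hR\cap L\}$ depends on infinitely many edges (vertices accumulate at $\hbd\Hd$), so the truncation you mention must be done as in the paper, by restricting to $L_\d=\R^{d-1}\times[\d;1]$, proving the functional inequality for the events $A^\d(r)$, and passing to the limit $\d\to0^+$ afterwards; this is what legitimizes Russo's formula and the countable witness decomposition in Lemma \ref{saus}.
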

\begin{proof}
As mentioned earlier, this proof is an adaptation of the proof of Theorem (5.4) in \cite{Grim} based on the work \cite{Men}. Its structure and most of its notation are also borrowed from \cite{Grim}, so it is quite easy to compare both the proofs. (The differences are technical and they are summarised in Remark \ref{diffprfs}.) The longest part of this proof is devoted to show functional inequality \eqref{fctineq(gt)1} and it goes roughly linearly. Then follows Lemma \ref{gt<=sqrt}, whose proof, using that functional inequality, is deferred to the end of this section. Roughly speaking, that lemma provides a mild asymptotic estimate for $g_p$ (more precisely: for $\gt_p$ defined below), which is then sharpened to that desired in Theorem \ref{lemMen}, using repeatedly inequality \eqref{fctineq(gt)1}.
\par At some point, we would like to use random variables with left-continuous distribution function\footnote{By the \emd{left-continuous distribution function} of a probability distribution (measure) $\mu$ on $\R$, we mean the function $\R\ni x\mapsto\mu((-\infty,x))$.} $1-g_p$. Because $1-g_p$ does not need to be left-continuous, we replace $g_p$, when needed, by its left-continuous version $\gt_p$ defined as follows:
\begin{df}
Put $\gt_p(r)=\lim_{\rho\to r^-} g_p(\rho)$ for $r>0$.
\end{df}
As one of the cornerstones of this proof, we are going to prove the following functional inequality for $\gt_\cdot(\cdot)$: for any $\al,\b$ s.t.~$0\le\al<\b\le 1$ and for $r>0$,
\begin{equation}\label{fctineq(gt)1}
\gt_\al(r) \le \gt_\b(r)\exp\(-(\b-\al)\(\frac{r}{a+\int_0^r \gt_\b(m)\,\ud m} - 1\)\),
\end{equation}
where $a$ is a positive constant depending only on $G$. Note that it implies Theorem \ref{lemMen} provided that the integral in the denominator is a bounded function of $r$.
\par We are going to approach this inequality, considering the following events depending only on a finite fragment of the percolation configuration and proving functional inequality \eqref{fctineq(f)} (see below). Cf.\ Remark \ref{whyLd}.
\begin{df}
Fix arbitrary $(h,R)\in\IOd$. We are going to use events $A^\d(r)$ defined as follows: let $p\in[0,1]$, $r>0$ and $\d\in(0;h]$, and define $L_\d=\R^{d-1}\times[\d;1]\subseteq\Hd$ (not to be confused with $L^\d$). Let the event
$$A^\d(r)=\{\ho\conn S_r\textrm{ in }\G\hR\cap L_\d\}$$
and let
$$f_p^\d(r)=\Pr_p(A^\d(r)).$$
\end{df}
Now, we are going to show that the functions defined above satisfy a functional inequality:
\begin{equation}\label{fctineq(f)}
f_\al^\d(r)\le f_\b^\d(r)\exp\(-(\b-\al)\(\frac{r}{a+\int_0^r \gt_\b(m)\,\ud m} - 1\)\)
\end{equation}
for any $0\le\al<\b\le 1$, $r>0$ and for $\d\in(0;h)$.
Having obtained this, we will pass to some limits and to supremum over $(h,R)$, obtaining the inequality \eqref{fctineq(gt)1}.
\par Note that, if there is no path joining $\ho$ to $S_r$ in $\G\hR\cap L_\d$ at all, then for any $p\in[0;1]$, $f_p^\d(r)=0$ and the inequality \eqref{fctineq(f)} is obvious. The same happens when $\al=0$. Because in the proof of that inequality we need $f_p^\d(r)>0$ and $\al>0$, now we make the following assumption (without loss of generality):
\begin{assm}\label{assmapath}
We assume that there is a path joining $\ho$ to $S_r$ in $\G\hR\cap L_\d$ and that $\al>0$. (Then for $p>0$, $f_p^\d(r)>0)$.)
\end{assm}
The first step in proving inequality \eqref{fctineq(f)} in using Russo's formula for the events $A^\d(r)$. Before we formulate it, we provide a couple of definition needed there.
\begin{df}
Next, for a random event $A$ in the percolation on $\G\hR$, call an edge \emd{pivotal} for a given configuration iff changing the state of that edge (and preserving other edges' states) causes $A$ to change its state as well (from occurring to not occurring or \emph{vice-versa}). Then, let $N(A)$ be the (random) number of all edges pivotal for $A$.
\end{df}
\begin{df}\label{incr}
We say that an event $A$ (being a set of configurations) is \emd{increasing} iff for any configurations $\omega\subseteq\omega'$, if $\omega\in A$, then $\omega'\in A$.
\end{df}
\begin{thm}[Russo's formula]
Consider Bernoulli bond percolation\ on any graph $G$ and let $A$ be an increasing event defined in terms of the states of only finitely many edges of $G$. Then
$$\frac{\ud}{\ud p}\Pr_p(A)=\Est_p(N(A)).$$
\qed
\end{thm}
This formula is proved as Theorem (2.25) in \cite{Grim} for $G$ being the classical lattice $\mathbb{Z}^d$, but the proof applies for any graph $G$.
\par Let $p>0$. The events $A^\d(r)$ depend on the states of only finitely many edges of $\G\hR$ (namely, those intersecting $L_\d\cap B_r$), so we are able to use Russo's formula for them, obtaining
$$\frac{\ud}{\ud p} f_p^\d(r) = \Est_p(N(A^\d(r))).$$
Now, for $e\in E(\G\hR)$, the event $\{e\textrm{ is pivotal for }A^\d(r)\}$ is independent of the state of $e$ (which is easily seen; it is the rule for any event), so
\begin{align}
\Pr_p(A^\d(r)\land e\textrm{ is pivotal for }A^\d(r)) &= \Pr_p(e\textrm{ is open and pivotal for }A^\d(r)) =\\
&=\textrm{(because $A^\d(r)$ is increasing)}\\
&= p\Pr_p(e\textrm{ is pivotal for }A^\d(r)),
\end{align}
hence
\begin{align}
\frac{\ud}{\ud p} f_p^\d(r) &= \sum_{e\in E(\G\hR)} \Pr_p(e\textrm{ is pivotal for }A^\d(r)) =\\
&= \frac 1p \sum_{e\in E(\G\hR)} \Pr_p(A^\d(r)\land e\textrm{ is pivotal for }A^\d(r)) =\\
&=\frac{f_p^\d(r)}{p} \sum_{e\in E(\G\hR)} \Pr_p(e\textrm{ is pivotal for }A^\d(r)|A^\d(r)) =\\
&= \frac{f_p^\d(r)}{p} \Est_p(N(A^\d(r))|A^\d(r)),
\end{align}
which can be written as
$$\frac{\ud}{\ud p} \ln(f_p^\d(r)) = \frac{1}{p} \Est_p(N(A^\d(r))|A^\d(r)).$$
For any $0<\al<\b\le 1$, integrating over $[\al,\b]$ and exponentiating the above equality gives
$$\frac{f_\al^\d(r)}{f_\b^\d(r)} = \exp\(-\int_\al^\b \frac 1p \Est_p(N(A^\d(r))|A^\d(r))\,\ud p\),$$
which implies
\begin{align}
f_\al^\d(r) 
\le f_\b^\d(r) \exp\(-\int_\al^\b \Est_p(N(A^\d(r))|A^\d(r))\,\ud p\).\label{corineqRusso}
\end{align}
At this point, our aim is to bound $\Est_p(N(A^\d(r))|A^\d(r))$ from below.
\begin{df}
Let $\eta$ denote the percolation configuration in $\G\hR\cap L_\d$, i.e.
$$\eta = \Phi\hR[\omega]\cap L_\d.$$
\end{df}
Fix any $r>0$ and $\d\in(0;h]$ and assume for a while that $A^\d(r)$ occurs. Let us make a picture of the cluster of $\ho$ in $\G\hR\cap L_\d$ in the context of the pivotal edges for $A^\d(r)$ (the same picture as in \cite{Grim} and \cite{Men}). If $e\in E(\G\hR)$ is pivotal for $A^\d(r)$, then if we change the percolation configuration by closing $e$, we cause the cluster of $C\hR_{L_\d}(\ho)$ to be disjoint from $S_r$. So, in our situation, all the pivotal edges lie on any open path in $\G\hR\cap L_\d$ joining $\ho$ to $S_r$ and they are visited by the path in the same order and direction (regardless of the choice of the path).
\begin{df}\label{dfe_i}
Let $N=N(A^\d(r))$, and let $e_1,\ldots,e_N$ be this ordering, and denote by $x_i,y_i$ the endvertices of $e_i$, $x_i$ being the one closer to $\ho$ along a path as above. Also, let $y_0=\ho$.
\end{df}
Note that because for $i=1,\ldots,N$, there is no edge separating $y_{i-1}$ from $x_i$ in the open cluster in $\G\hR\cap L_\d$, by Menger's theorem (see e.g.~\cite[Theorem~3.3.1, Corollary~3.3.5(ii)]{Diest}), there exist two edge-disjoint open paths in that cluster joining $y_{i-1}$ to $x_i$. (One can say, following the discoverer of this proof idea, that that open cluster resembles a chain of sausages.)
\begin{df}
Now, for $i=1,\ldots,N$, let $\rho_i=d_\hbd(y_{i-1},x_i)$ (this way of defining $\rho_i$, of which one can think as ``projection length'' of the $i$-th ``sausage'', is an adaptation of that from \cite{Grim}).
\end{df}
\par Now we drop the assumption that $A^\d(r)$ occurs. The next lemma is used to compare $(\rho_1,\ldots,\rho_N)$ to some renewal process with inter-renewal times of roughly the same distribution as the size of $C_L\hR(\ho)$.
\begin{df}
Let $a$ denote the maximal projection distance (in the sense of $d_\hbd$) between the endpoints of a single edge $\G\hR$ crossing $L$.
\end{df}
\begin{lem}[cf.~\protect{\cite[Lemma~(5.12)]{Grim}}]\label{saus}
Let $k\in\N_+$ and let $r_1,\ldots,r_k\ge 0$ be such that~$\sum_{i=1}^k r_i \le r - (k-1)a$. Then for $0<p<1$,
$$\Pr_p(\rho_k<r_k,\ \rho_i=r_i\textrm{ for }i<k|A^\d(r)) \ge (1-g_p(r_k))\Pr_p(\rho_i=r_i\textrm{ for }i<k|A^\d(r)).$$
\end{lem}
\begin{rem}
We use the convention that for $i\in\N$ such that~$i>N(A^\d(r))$ (i.e.~$e_i$, $\rho_i$ are undefined), $\rho_i=+\infty$ (being greater than any real number). On the other hand, whenever we mention $e_i$, $i\le N(A^\d(r))$.
\end{rem}
\begin{proof}[Proof of the lemma]
This proof mimics that of \cite[Lemma~(5.12)]{Grim}. Let $k\ge 2$ (we defer the case of $k=1$ to the end of the proof).
\begin{df}
Let for $e\in E(\G\hR\cap L^\d)$, $D_e$ be the connected component of $\ho$ in $\eta\sm\{e\}$. Let $B_e$ denote the event that the following conditions are satisfied:
\begin{itemize}
\item $e$ is open;
\item exactly one endvertex of $e$ lies in $D_e$---call it $x(e)$ and the other---$y(e)$;
\item $D_e$ is disjoint from $S_r$;
\item there are $k-1$ pivotal edges for the event $\{\ho\conn y(e)\textrm{ in }\eta\}$ (i.e. the edges each of which separates $\ho$ from $y(e)$ in $D_e\cup\{e\}$)---call them $e_1'=\{x_1',y_1'\},\ldots,e_{k-1}'=\{x_{k-1}',y_{k-1}'\}=e$, $x_i'$ being closer to $\ho$ than $y_i'$, in the order from $\ho$ to $y(e)$ (as in the Definition \ref{dfe_i});
\item $d_\hbd(y_{i-1}',x_i')=r_i$ for $i<k$, where $y_0'=\ho$.
\end{itemize}
Let $B=\bigcup_{e\in E(\G\hR\cap L^\d)} B_e$. When $B_e$ occurs, we say that $D_e\cup\{e\}$ with $y(e)$ marked, as a graph with distinguished vertex, is a \emd{witness for $B$}.
\end{df}
\par Note that it may happen that there are more than one such witnesses (which means that $B_e$ occurs for many different $e$). On the other hand, when $A^\d(r)$ occurs, then $B_e$ occurs for only one edge $e$, namely $e=e_{k-1}$ (in other words, $B\cap A^\d(r)=\bigcupdot_{e\in E(\G\hR\cap L^\d)} (B_e\cap A^\d(r))$), and there is only one witness for $B$. Hence,
$$\Pr_p(A^\d(r)\cap B) = \sum_{\Gamma} \Pr_p(\Gamma\textrm{ a witness for }B)\Pr_p(A^\d(r)|\Gamma\textrm{ a witness for }B),$$
where the sum is always over all $\Gamma$ being finite subgraphs of $\G\hR\cap L_\d$ with distinguished vertices such that~$\Pr_p(\Gamma\textrm{ a witness for }B)>0$.
\par For $\Gamma$ a graph with distinguished vertex, let $y(\Gamma)$ denote that vertex. Under the condition that $\Gamma$ is a witness for $B$, $A^\d(r)$ is equivalent the event that $y(\Gamma)$ is joined to $S_r$ by an open path in $\eta$ which is disjoint from $V(\Gamma)\sm \{y(\Gamma)\}$. We shortly write the latter event $\{y(\Gamma)\conn S_r\textrm{ in $\eta$ off }\Gamma\}$.
Now, the event $\{\Gamma\textrm{ a witness for }B\}$ depends only on the states of edges incident to vertices from $V(\Gamma)\sm\{y(\Gamma)\}$, so it is independent of the event $\{y(\Gamma)\conn S_r\textrm{ in $\eta$ off }\Gamma\}$. Hence,
\begin{equation}\label{P(AB)}
\Pr_p(A^\d(r)\cap B) = \sum_{\Gamma} \Pr_p(\Gamma\textrm{ a witness for }B)\Pr_p(y(\Gamma)\conn S_r\textrm{ in $\eta$ off }\Gamma).
\end{equation}
A similar reasoning, performed below, gives us the estimate of $\Pr_p(\{\rho_k\ge r_k\}\cap A^\d(r)\cap B)$. Here we use also the following fact: conditioned on the event $\{\Gamma\textrm{ a witness for }B\}$, the event $A^\d(r)\cap\{\rho_k\ge r_k\}$ is equivalent to each of the following:
\begin{align*}
&(A^\d(r)\land e_k\textrm{ does not exist}) \lor (A^\d(r)\land e_k\textrm{ exists }\land\rho_k\ge r_k) \iff\\
\iff &(\exists\textrm{ two edge-disjoint paths joining $y(\Gamma)$ to $S_r$ in $\eta$ off }\Gamma) \lor\\
\lor &(\exists\textrm{ two edge-disjoint paths in $\eta$ off $\Gamma$, joining $y(\Gamma)$ to $S_r$ and to $S_{r_k}(y(\Gamma))$, resp.}) \iff\\
\iff &(\exists\textrm{ two edge-disjoint paths in $\eta$ off $\Gamma$, joining $y(\Gamma)$ to $S_r$ and to $S_{r_k}(y(\Gamma))$, resp.}),
\end{align*}
because $S_{r_k}(y(\Gamma))\subseteq B_r$ from the assumption on $\sum_{i=1}^k r_i$.
So we estimate
\begin{multline}\label{P(rhoAB)}
\Pr_p(\{\rho_k\ge r_k\}\cap A^\d(r)\cap B) = \sum_{\Gamma} \Pr_p(\Gamma\textrm{ a witness for }B) \Pr_p(\{\rho_k\ge r_k\}\cap A^\d(r)|\Gamma\textrm{ a witness for }B) =\\
= \sum_{\Gamma} \Pr_p(\Gamma\textrm{ a witness for }B) \Pr_p((y(\Gamma)\conn S_r\textrm{ in $\eta$ off }\Gamma) \circ (y(\Gamma)\conn S_{r_k}(y(\Gamma))\textrm{ in $\eta$ off }\Gamma)),
\end{multline}
where the operation ``$\circ$'' is defined below:
\begin{df}
For increasing events $A$ and $B$ in a percolation on any graph $G$, the event $A\circ B$ means that ``$A$ and $B$ occur on disjoint sets of edges''. Formally,
$$A\circ B = \{\omega_A\cupdot\omega_B: \omega_A,\omega_B\subseteq E(G) \land \omega_A\in A\land\omega_B\in B\},$$
that is, $A\circ B$ is the set of configurations containing two disjoint set of open edges ($\omega_A,\omega_B$ above) which guarantee occurring of the events $A$ and $B$, respectively.
\end{df}
Now, we are going to use the following BK inequality (proved in \cite{Grim}):
\begin{thm}[BK inequality, \protect{\cite[Theorems (2.12) and (2.15)]{Grim}}]\label{BKineq}
For any graph $G$ and increasing events $A$ and $B$ depending on the states of only finitely many edges in $p$-Bernoulli bond percolation\ on $G$, we have
$$\Pr_p(A\circ B) \le \Pr_p(A)\Pr_p(B).$$
\qed
\end{thm}
We use this inequality for the last term (as the events involved are increasing (see def.~\ref{incr}) and defined in terms of only the edges from $E(\G\hR\cap(L_\d\cap B_r))$), obtaining
\begin{eqnarray*}
\Pr_p(\{\rho_k\ge r_k\}\cap A^\d(r)\cap B) &\le &\sum_{\Gamma} \Pr_p(\Gamma\textrm{ a witness for }B) \cdot \Pr_p(y(\Gamma)\conn S_r\textrm{ in $\eta$ off }\Gamma) \cdot\\
&& {}\cdot \Pr_p(y(\Gamma)\conn S_{r_k}(y(\Gamma))\textrm{ in $\eta$ off }\Gamma) \le\\
&\le &\(\sum_{\Gamma} \Pr_p(\Gamma\textrm{ a witness for }B) \Pr_p(y(\Gamma)\conn S_r\textrm{ in $\eta$ off }\Gamma)\)g_p(r_k) =\\
&= &\Pr_p(A^\d(r)\cap B) g_p(r_k)
\end{eqnarray*}
(by \eqref{P(AB)}). Dividing by $\Pr_p(A^\d(r))$ (which is positive by Assumption \ref{assmapath}) gives
\begin{align}\label{P(|A)}
\Pr_p(\{\rho_k\ge r_k\}\cap B|A^\d(r)) &\le \Pr_p(B|A^\d(r)) g_p(r_k) \quad|\quad\Pr_p(B|A^\d(r)) - \cdot\\
\Pr_p(\{\rho_k< r_k\}\cap B|A^\d(r)) &\ge \Pr_p(B|A^\d(r)) (1-g_p(r_k)).
\end{align}
Note that, conditioned on $A^\d(r)$, $B$ is equivalent to the event $\{\rho_i=r_i\textrm{ for }i<k\}$, so the above amounts to
\begin{equation}
\Pr_p(\rho_k<r_k, \rho_i=r_i\textrm{ for }i<k |A^\d(r)) \ge \Pr_p(\rho_i=r_i\textrm{ for }i<k|A^\d(r)) (1-g_p(r_k)),
\end{equation}
which is the desired conclusion.
\par Now, consider the case of $k=1$. In this case, similarly to \eqref{P(rhoAB)} and thanks to the assumption $r_1\le r$,
\begin{align}
\Pr_p(\{\rho_1\ge r_1\}\cap A^\d(r)) = \Pr_p((\ho\conn S_{r_1}\textrm{ in }\eta)\circ(\ho\conn S_r\textrm{ in }\eta)) \le
g_p(r_1) \Pr_p(A^\d(r)).
\end{align}
Further, similarly to \eqref{P(|A)},
\begin{equation}
\Pr_p(\rho_1<r_1|A^\d(r))\ge 1 - g_p(r_1),
\end{equation}
which is the lemma's conclusion for $k=1$.
\end{proof}

Now, we want to do some probabilistic reasoning using random variables with the left-continuous distribution function $1-\gt_p$.
The function $1-\gt_p$ is non-decreasing (because for $(h,R)\in\IOd$, $\Pr_p(\ho \conn S_r\textrm{ in }\G\hR\cap L)$ is non-increasing with respect to $r$, so $g_p$ and $\gt_p$ are non-increasing as well), left-continuous, with values in $[0;1]$ and such that~$1-\gt_p(0)=0$, so it is the left-continuous distribution function of a random variable with values in $[0;\infty]$.
\begin{denot}
Let $M_1, M_2,\ldots$ be an infinite sequence of independent random variables all distributed according to $1-\gt_p$ and all independent of the whole percolation process. Because their distribution depends on $p$, we will also denote them by $M_1^{(p)},M_2^{(p)},\ldots$. (Here, an abuse of notation is going to happen, as we are still writing $\Pr_p$ for the whole probability measure used also for defining the variables $M_1, M_2,\ldots$.)
\end{denot}
We can now state the following corollary of Lemma \ref{saus}:
\begin{cor}\label{corsaus}
For any $r>0$, positive integer $k$ and $0<p<1$,
$$\Pr_p(\rho_1+\cdots+\rho_k < r-(k-1)a |A^\d(r)) \ge \Pr_p(M_1+\cdots+M_k < r-(k-1)a ).$$
\end{cor}
\begin{proof}
We compose the proof of the intermediate inequalities:
\begin{align*}
&\Pr_p(\rho_1+\cdots+\rho_k < r-(k-1)a |A^\d(r)) \ge\\
\ge &\Pr_p(\rho_1+\cdots+\rho_{k-1}+M_k < r-(k-1)a |A^\d(r)) \ge\cdots\\
\cdots\ge &\Pr_p(\rho_1+M_2+\cdots+M_k < r-(k-1)a |A^\d(r)) \ge\\
\ge &\Pr_p(M_1+\cdots+M_k < r-(k-1)a |A^\d(r)) = \Pr_p(M_1+\cdots+M_k < r-(k-1)a)
\end{align*}
using the step:
\begin{align}
&\Pr_p(\rho_1+\cdots+\rho_j + M_{j+1}+\cdots+M_k < r-(k-1)a |A^\d(r)) \ge\\
\ge &\Pr_p(\rho_1+\cdots+\rho_{j-1} + M_j+\cdots+M_k < r-(k-1)a |A^\d(r)).
\end{align}
for $j=k,k-1,\ldots,2,1$. Now we prove this step: let $j\in\{1,2,\ldots k\}$.
\begin{df}
Put
$$\RhR = \{d_\hbd(x,y): x,y\in V(\G\hR)\}.$$
\end{df}
Note that it is a countable set of all possible values of $\rho_i$ for $i=1,\ldots,N$.

We express the considered probability as an integral, thinking of the whole probability space as Cartesian product of the space on which the percolation processes are defined and the space used for defining $M_1,M_2,\ldots$, and using a version of Fubini theorem for events:
\begin{align*}
&\Pr_p(\rho_1+\cdots+\rho_j + M_{j+1}+\cdots+M_k < r-(k-1)a |A^\d(r)) =\\
= &\int \Pr_p(\rho_1+\cdots+\rho_j + S_M < r-(k-1)a |A^\d(r))\,\ud\L_{j+1}^k(S_M) =\\
\intertext{(here $\L_{j+1}^k$ denotes the distribution of the random variable $M_{j+1}+\cdots+M_k$)}
= &\int \rvoidindop{\sum}{(r_1,\ldots,r_{j-1})}\quad
 \Pr_p\left(\left.\rho_i=r_i\textrm{ for }i<j\land \rho_j < r-(k-1)a - \sum_{i=1}^{j-1} r_i - S_M \right|A^\d(r)\right)\,\ud\L_{j+1}^k(S_M) \ge\\
\intertext{(where the sum is taken over all $(r_1,\ldots,r_{j-1})\in(\RhR)^{j-1}:r_1+\cdots+r_{j-1}<r-(k-1)a-S_M$)}
\ge &\int \rvoidindop{\sum}{(r_1,\ldots,r_{j-1})}\quad
 \left(1-\gt_p\left(r-(k-1)a - \sum_{i=1}^{j-1} r_i - S_M\right)\right) \Pr_p(\rho_i=r_i\textrm{ for }i<j |A^\d(r))\,\ud\L_{j+1}^k(S_M) =\\
\intertext{(from Lemma \ref{saus} and because $g_p\le\gt_p$)}
= &\int \rvoidindop{\sum}{(r_1,\ldots,r_{j-1})}\quad
 \Pr_p \left(\left.M_j < r-(k-1)a - \sum_{i=1}^{j-1} r_i - S_M \land \rho_i=r_i\textrm{ for }i<j \right|A^\d(r)\right)\,\ud\L_{j+1}^k(S_M) =\\
= &\int \Pr_p(\rho_1+\cdots+\rho_{j-1} + M_j+S_M < r-(k-1)a |A^\d(r))\,\ud\L_{j+1}^k(S_M) =\\
= &\Pr_p(\rho_1+\cdots+\rho_{j-1} + M_j+M_{j+1}+\cdots+M_k < r-(k-1)a |A^\d(r)).
\end{align*}
That completes the proof.
\end{proof}

\begin{lem}[cf.~\protect{\cite[Lemma~(5.17)]{Grim}}]\label{lemEN>=frac}
For $0<p<1$, $r>0$,
$$\Est_p(N(A^\d(r))|A^\d(r)) \ge \frac{r}{a+\int_0^r \gt_p(m)\,\ud m}-1.$$
\end{lem}
\begin{proof}
For any $k\in\N_+$, if $\rho_1+\cdots+\rho_k < r-(k-1)a$, then $e_1,\ldots,e_k$ exist and $N(A^\d(r))\ge k$. So, from the corollary above,
\begin{align}\label{P(N>=k|A)>=}
\Pr_p(N(A^\d(r))\ge k|A^\d(r)) \ge \Pr_p(\sum_{i-1}^k \rho_i < r-(k-1)a) \ge \Pr_p (\sum_{i-1}^k M_i < r-(k-1)a).
\end{align}
Now, we use a calculation which relates $a+\int_0^r \gt_p(m)\,\ud m$ to the distribution of $M_1$. Namely, we replace the variables $M_i$ by
$$M_i' = a+\min(M_i,r)$$
for $i=1,2,\ldots$ (a kind of truncated version of $M_i$). In this setting,
\begin{align}
\sum_{i=1}^k M_i < r-(k-1)a \iff
\sum_{i=1}^k \min(M_i,r) < r-(k-1)a \iff
\sum_{i=1}^k M_i' < r+a,
\end{align}
so from \eqref{P(N>=k|A)>=},
\begin{align}
\Est_p(N(A^\d(r))|A^\d(r)) &= \sum_{k=1}^\infty \Pr_p(N(A^\d(r))\ge k |A^\d(r)) \ge\\
&\ge \sum_{k=1}^\infty \Pr_p(\sum_{i=1}^k M_i' < r+a) = \sum_{k=1}^\infty \Pr_p(K\ge k+1) =\\
&= \Est(K)-1,
\end{align}
where
$$K = \min\{k:M_1'+\cdots+M_k'\ge r+a\}.$$
Let for $k\in\N$,
$$S_k=M_1'+\cdots+M_k'.$$
By Wald's equation (see e.g.\ \cite[p.~396]{GrimSti}) for the random variable $S_K$,
$$r+a\le \Est(S_K)=\Est(K)\Est(M_1').$$
In order that Wald's equation were valid for $S_K$, the random variable $K$ has to satisfy $\Est(M_i'|K\ge i) = \Est(M_i')$ for $i\in\N_+$. But we have
$$K\ge i \iff M_1'+\cdots+M_{i-1}'<r+a,$$
so $M_i'$ is independent of the event $\{K\ge i\}$ for $i\in\N_+$, which allows us to use Wald's equation. (In fact, $K$ is a so-called stopping time for the sequence $(M_i')_{i=1}^\infty$.) Hence,
\begin{align}
\Est_p(N(A^\d(r))|A^\d(r)) &\ge \Est(K)-1 \ge \frac{r+a}{\Est(M_1')}-1 =\\
&= \frac{r+a}{a+\int_0^\infty \Pr_p(\min(M_1,r)\ge m)\,\ud m} -1 \ge \frac{r}{a+\int_0^r \gt_p(m)\,\ud m}-1,
\end{align}
which finishes the proof.
\end{proof}

Now, combining that with inequality \eqref{corineqRusso} for $0<\al<\b\le 1$, we have
\begin{align}
f_\al^\d(r) &\le f_\b^\d(r) \exp\(-\int_\al^\b \Est_p(N(A^\d(r))|A^\d(r))\,\ud p\) \le\\
&\le f_\b^\d(r) \exp\(-\int_\al^\b \(\frac{r}{a+\int_0^r \gt_p(m)\,\ud m}-1\)\,\ud p\) \le\\
&\le f_\b^\d(r) \exp\(-(\b-\al) \(\frac{r}{a+\int_0^r \gt_\b(m)\,\ud m}-1\)\)\label{fctineq(f)beflims}
\end{align}
(because $\gt_p\le \gt_\b$ for $p\le\b$), which completes the proof of inequality \ref{fctineq(f)}. (Let us now drop Assumption \ref{assmapath}.)

\par Now, note that for any $r>0$ and $p\in[0;1]$, the event $A^\d(r)$ increases as $\d$ decreases. Thus, taking the limit with $\d\to 0$, we have
$$\lim_{\d\to 0^+} f_p^\d(r) = \Pr_p(\bigcup_{\d> 0} A^\d(r)) = \Pr_p(\ho\conn S_r\textrm{ in }\G\hR\cap L).$$
So for any $r>0$ and $0\le\al<\b\le 1$, using this for inequality \ref{fctineq(f)} gives
\begin{multline}
\Pr_\al(\ho\conn S_r\textrm{ in }\G\hR\cap L) \le\\
\le \Pr_\b(\ho\conn S_r\textrm{ in }\G\hR\cap L) \exp\(-(\b-\al) \(\frac{r}{a+\int_0^r \gt_\b(m)\,\ud m}-1\)\).
\end{multline}
Further, we take the supremum over $(h,R)\in\IOd$, obtaining
$$g_\al(r) \le g_\b(r) \exp\(-(\b-\al) \(\frac{r}{a+\int_0^{r} \gt_\b(m)\,\ud m}-1\)\).$$
At last, taking the limits with $r$ from the left, we get the functional inequality \eqref{fctineq(gt)1} involving only $\gt_\cdot(\cdot)$:
\begin{equation}\label{fctineq(gt)2}
\gt_\al(r) \le \gt_\b(r) \exp\(-(\b-\al) \(\frac{r}{a+\int_0^{r} \gt_\b(m)\,\ud m}-1\)\).
\end{equation}
(Note that the exponent remains unchanged all the time from \eqref{fctineq(f)beflims} till now.)
\par Recall that once we have
$$\int_0^{\infty} \gt_\b(m)\,\ud m = \Est(M_1^{(\b)})<\infty,$$
then we obtain Theorem \ref{lemMen} for $\gt_\al(r)$, for $\al<\b$. This bound is going to be established by showing the rapid decay of $\gt_p$, using repeatedly \eqref{fctineq(gt)2}.  The next lemma is the first step of this procedure.
\begin{lem}[cf.~\protect{\cite[Lemma~(5.24)]{Grim}}]\label{gt<=sqrt}
For any $p<p_0$, there exists $\d(p)$ such that
$$\gt_p(r)\le \d(p)\cdot\frac{1}{\sqrt{r}}\quad\textrm{for }r>0.$$
\end{lem}
We defer proving the above lemma to the end of this section.
\par Obtaining Theorem \ref{lemMen} (being proved) from Lemma \ref{gt<=sqrt} is relatively easy. First, we deduce that for $r>0$ and $p<p_0$,
$$\int_0^{r} \gt_p(m)\,\ud m \le 2\d(p)\sqrt{r},$$
so if $r\ge a^2$, then
$$a+\int_0^{r} \gt_p(m)\,\ud m \le (2\d(p)+1)\sqrt{r}.$$
Then, using \eqref{fctineq(gt)2}, for $0\le\al<\b<p_0$, we have
\begin{align}
\int_{a^2}^{\infty} \gt_\al(r)\,\ud r &\le \int_{a^2}^{\infty} \exp\(-(\b-\al) \(\frac{r}{a+\int_0^{r} \gt_\b(m)\,\ud m}-1\)\) \,\ud r \le\\
&\le e \int_{a^2}^{\infty} \exp\biggl(-\underbrace{\frac{\b-\al}{2\d(\b)+1}}_{=C>0} \sqrt{r}\biggl) \,\ud r =\\
&= e \int_{a}^{\infty} e^{-Cx}\cdot 2x \,\ud x,
\end{align}
so
$$
\Est(M_1^{(\al)}) = \int_0^\infty \gt_\al(r)\,\ud r \le a^2 + e\int_{a}^{\infty} e^{-Cx}\cdot 2x \,\ud x < \infty,
$$
as desired. Finally, we use the finiteness of $\Est(M_1^{(\al)})$ as promised: for $r>0$ and $0\le\al<p_0$, if we take $\al<\b<p_0$, then, using \eqref{fctineq(gt)2} again,
\begin{align}
g_\al(r) \le \gt_\al(r) \le \exp\(-(\b-\al)\(\frac{r}{a+\Est(M_1^{(\b)})} -1\)\) \le e^{-\phi(\al,\b)r+\g(\al,\b)},
\end{align}
for some constants $\phi(\al,\b),\g(\al,\b)>0$.
\par Now we perform a standard estimation, aiming to rule out the additive constant $\g(\al,\b)$. For any $0<\psi_1<\phi(\al,\b)$, there exists $r_0>0$ such that for $r\ge r_0$,
$$-\phi(\al,\b)r+\g(\al,\b) \le -\psi_1 r,$$
so
$$g_\al(r)\le e^{-\psi_1 r}.$$
On the other hand, for any $r>0$, $g_p(r)$ is no greater than the probability of opening at least on edge adjacent to $o$, so $g_\al(r) \le 1-(1-\al)^{\deg(o)}<1$, where $\deg(o)$ is the degree of $o$ in the graph $G$. Hence,
$$g_\al(r)\le e^{-\psi_2(\al)r}$$
for $r\le r_0$, for some sufficiently small $\psi_2(\al)>0$. Taking $\psi = \min(\psi_1,\psi_2(\al))$ gives
$$g_\al(r) \le e^{-\psi r}$$
for any $r>0$, completing the proof of Theorem \ref{lemMen}.
\end{proof}

\par Now we are going to prove Lemma \ref{gt<=sqrt}.
\begin{proof}[Proof of Lemma \ref{gt<=sqrt}]
Assume without loss of generality that $\gt_p(r)>0$ for $r>0$.
We are going to construct sequences $(p_i)_{i=1}^\infty$ and $(r_i)_{i=1}^\infty$ such that
$$p_0>p_1>p_2>\cdots>p,\quad 0<r_1\le r_2\le\cdots$$
and such that the sequence $(\gt_{p_i}(r_i))_{i=1}^\infty$ decays rapidly. The construction is by recursion: for $i\ge 1$, having constructed $p_1,\ldots,p_i$ and $r_1,\ldots,r_i$, we put
\begin{equation}\label{recur}
r_{i+1} = r_i/g_i\quad\textrm{and}\quad p_{i+1} = p_i - 3g_i(1-\ln g_i),
\end{equation}
where $g_i = \gt_{p_i}(r_i)$. (Note that indeed, $r_{i+1}\le r_i$ and $p_{i+1}<p_i$.) The above formula may give an incorrect value of $p_{i+1}$, i.e.~not satisfying $p_{i+1}>p$ (this condition is needed because we want to bound values of $\gt_p$). In order to prevent that, we choose appropriate  values of $p_1,r_1$, using the following fact to bound the difference $p_1 - p_i$ by a small number independent of $i$. 
\begin{prop}
If we define sequence $(x_i)_{i=1}^\infty$ by $x_{i+1}=x_i^2$ for $i\ge 1$ (i.e.~$x_i=x_1^{2^{i-1}}$) with $0<x_1<1$, then
\begin{equation}\label{s(x)}
s(x_1) := \sum_{i=1}^\infty 3x_i(1-\ln x_i)
\end{equation}
is finite and $s(x_1)\tends{x_1\to 0}0$.
\end{prop}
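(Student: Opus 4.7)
The plan is to make the recursion explicit. Iterating $x_{i+1} = x_i^2$ from $x_1 \in (0,1)$ gives $x_i = x_1^{2^{i-1}}$, and hence $\ln x_i = 2^{i-1} \ln x_1$. The $i$-th summand of \eqref{s(x)} becomes
\[
y_i := 3 x_i(1 - \ln x_i) = 3 x_1^{2^{i-1}} \bigl(1 + 2^{i-1} |\ln x_1|\bigr).
\]
Everything then reduces to showing that this sequence decays super-geometrically, which will give both the finiteness of $s(x_1)$ for every $x_1 \in (0,1)$ and the limit $s(x_1) \to 0$ as $x_1 \to 0^+$.

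For finiteness, I would compute the ratio of consecutive terms: setting $u_i = 2^{i-1} |\ln x_1|$, one has
\[
\frac{y_{i+1}}{y_i} = x_1^{2^{i-1}} \cdot \frac{1 + 2 u_i}{1 + u_i} \le 2 x_1^{2^{i-1}},
\]
because $(1+2u)/(1+u) \le 2$ for $u \ge 0$. Since $x_1^{2^{i-1}} \to 0$ for any $x_1 \in (0,1)$, the ratio is eventually less than $1/2$, so the tail of the series is dominated by a geometric series and $s(x_1) < \infty$.

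For the asymptotic as $x_1 \to 0^+$, assume $x_1 < 1/2$ and iterate the ratio estimate. A telescoping product gives
\[
y_i \le y_1 \cdot \prod_{j=1}^{i-1} 2 x_1^{2^{j-1}} = y_1 \cdot 2^{i-1} x_1^{2^{i-1}-1}.
\]
Because $2^{i-1} \ge i$ for every $i \ge 1$ and $x_1 < 1$, we have $x_1^{2^{i-1}-1} \le x_1^{i-1}$, hence $y_i \le y_1 (2x_1)^{i-1}$. Summing a geometric series,
\[
s(x_1) = \sum_{i=1}^\infty y_i \le \frac{y_1}{1 - 2x_1} = \frac{3 x_1 (1 + |\ln x_1|)}{1 - 2 x_1},
\]
and the right-hand side tends to $0$ as $x_1 \to 0^+$, since $x_1 |\ln x_1| \to 0$. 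There is no real obstacle in the argument; the only bookkeeping to be careful about is the inequality $2^{i-1} \ge i$ that allows the double-exponential exponent to be traded for a simple geometric factor $(2x_1)^{i-1}$.
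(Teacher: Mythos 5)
Your proof is correct: the ratio bound $y_{i+1}/y_i \le 2x_1^{2^{i-1}}$, the telescoping product, and the reduction $x_1^{2^{i-1}-1}\le x_1^{i-1}$ via $2^{i-1}\ge i$ all check out, and the resulting bound $s(x_1)\le 3x_1(1+|\ln x_1|)/(1-2x_1)$ for $x_1<1/2$ gives both finiteness and the limit. The paper offers no proof of this proposition beyond a one-line remark that the idea is like the estimate of the sum in \eqref{smallinfsum} (super-exponential decay beating the remaining factors, tail dominated by a geometric series), and your argument is a complete and faithful realization of exactly that idea.
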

(The idea of the proof of this fact is similar to that of estimating the sum in \eqref{smallinfsum}.)
To make use of it, we are going to bound $g_i$ by $x_i$ for any $i$ and to make $g_1$ small enough. It is done thanks to the two claims below, respectively.
\begin{clm}
If $p_1,\ldots,p_i>p$ and $r_1,\ldots,r_i>0$ are defined by \eqref{recur} with $r_1\ge a$, we have
$$g_{j+1}\le g_j^2$$
for $j=1,\ldots,i-1$.
\end{clm}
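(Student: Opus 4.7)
The plan is to apply the functional inequality \eqref{fctineq(gt)2} with $(\alpha,\beta)=(p_{j+1},p_j)$ at the point $r=r_{j+1}$, and then show that the recurrence \eqref{recur} has been tuned so that the resulting bound collapses to $g_j^2$. Since $g_j\le 1$, the definition $r_{j+1}=r_j/g_j$ gives $r_{j+1}\ge r_j$, and because $\gt_{p_j}$ is non-increasing in $r$ we obtain $\gt_{p_j}(r_{j+1})\le\gt_{p_j}(r_j)=g_j$. Substituted into \eqref{fctineq(gt)2} this yields
$$
g_{j+1}\le g_j\exp\!\left(-(p_j-p_{j+1})\left(\frac{r_{j+1}}{a+\int_0^{r_{j+1}}\gt_{p_j}(m)\,\ud m}-1\right)\right).
$$

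Next I would bound the integral by splitting at $r_j$: on $[0,r_j]$ the trivial bound $\gt_{p_j}\le 1$ contributes at most $r_j$, while on $[r_j,r_{j+1}]$ monotonicity gives $\gt_{p_j}(m)\le g_j$, contributing at most $(r_{j+1}-r_j)g_j=r_j(1-g_j)\le r_j$. Hence $\int_0^{r_{j+1}}\gt_{p_j}(m)\,\ud m\le 2r_j$, and using the hypothesis $a\le r_1\le r_j$ we get $a+\int_0^{r_{j+1}}\gt_{p_j}(m)\,\ud m\le 3r_j$, so
$$
\frac{r_{j+1}}{a+\int_0^{r_{j+1}}\gt_{p_j}(m)\,\ud m}-1\ge\frac{r_{j+1}}{3r_j}-1=\frac{1}{3g_j}-1.
$$

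Plugging in $p_j-p_{j+1}=3g_j(1-\ln g_j)$ and simplifying, the exponent becomes $(1-\ln g_j)(1-3g_j)=-\ln g_j+1-3g_j(1-\ln g_j)$, whence
$$
g_{j+1}\le g_j\exp\bigl(-(1-\ln g_j)(1-3g_j)\bigr)=g_j^2\exp\bigl(3g_j(1-\ln g_j)-1\bigr).
$$
Thus $g_{j+1}\le g_j^2$ reduces to the single scalar condition $3g_j(1-\ln g_j)\le 1$.

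The main obstacle is ensuring this smallness condition holds at every step $j=1,\dots,i-1$. I would handle it by induction, using the freedom in the outer proof of Lemma \ref{gt<=sqrt} to choose $r_1$ large: since $p_1<p_0$, the fact (established elsewhere in this section from the definition of $p_0$) that $\gt_{p_1}(r)\to 0$ as $r\to\infty$ allows picking $r_1\ge a$ so that $g_1$ falls below any prescribed threshold, in particular below the root of $3x(1-\ln x)=1$. The inductive bound $g_{j+1}\le g_j^2$ then forces $g_j\le g_1^{2^{j-1}}$, keeping every subsequent $g_j$ safely within the required regime, so the above argument closes at each step.
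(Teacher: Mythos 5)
Your proof is correct and follows essentially the same route as the paper's: apply \eqref{fctineq(gt)2} with $(\al,\b)=(p_{j+1},p_j)$ at $r=r_{j+1}$, bound the integral by splitting at $r_j$ to obtain $a+\int_0^{r_{j+1}}\gt_{p_j}(m)\,\ud m\le 3r_j=3g_jr_{j+1}$, and let the definition of $p_{j+1}$ collapse the exponent. The only divergence is bookkeeping of the additive $-1$ in the exponent: the paper discards it at the outset by bounding $e^{\b-\al}\le e$, whereas you carry it to the end and arrive at the residual condition $3g_j(1-\ln g_j)\le 1$. That condition is precisely $p_j-p_{j+1}\le 1$, which is already forced by the claim's own hypotheses (since $p_{j+1}>p\ge 0$ and $p_j<p_0\le 1$), so your closing paragraph appealing to the outer choice of $r_1$ in the proof of Lemma \ref{gt<=sqrt} is superfluous — and better omitted, since as written it establishes only a version of the claim carrying an extra smallness hypothesis rather than the claim as stated.
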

\begin{proof}
Let $j\in\{1,\ldots,i-1\}$. From \eqref{fctineq(gt)2},
\begin{align}
g_{j+1} &\le \gt_{p_j}(r_{j+1})\exp\(-(p_j-p_{j+1})\(\frac{r_{j+1}}{a+\int_0^{r_{j+1}} \gt_{p_j}(m)\,\ud m} -1\)\) \le\\
&\le g_j \exp\(1 - (p_j-p_{j+1}) \frac{r_{j+1}}{a+\int_0^{r_{j+1}} \gt_{p_j}(m)\,\ud m} \).
\end{align}
Inverse of the fraction above is estimated as follows
\begin{align}
\frac{1}{r_{j+1}} \(a+\int_0^{r_{j+1}} \gt_{p_j}(m)\,\ud m\) &\le \frac{a}{r_{j+1}} + \frac{r_j}{r_{j+1}} + \frac{1}{r_{j+1}} \int_{r_j}^{r_{j+1}} \gt_{p_j}(m)\,\ud m \le\\
&\le \frac{a}{r_{j+1}} + g_j + \frac{r_{j+1}-r_j}{r_{j+1}} \gt_{p_j}(r_j) \le
\intertext{(using $r_{j+1} = r_j/g_j$ and the monotonicity of $\gt_{p_j}(\cdot)$)}
&\le \frac{a}{r_{j+1}} + 2g_j.
\end{align}
Now, by the assumption, $r_j\ge r_1\ge a$, so $r_{j+1} = r_j/g_j \ge a/g_j$ and
$$\frac{a}{r_{j+1}} + 2g_j \le 3g_j.$$
That gives
$$
g_{j+1} \le g_j \exp\(1 - \frac{p_j-p_{j+1}}{3g_j}\) = g_j^2
$$
by the definition of $p_{j+1}$.
\end{proof}
\begin{denot1}
Put
$$M\hR=r(C\hR)$$
for $(h,R)\in\IOd$.
\end{denot1}
Note that, by Remark \ref{rembdd}, for any $p\in\NP$, $\Pr_p$-a.s.~$ M\hR<\infty$.
\begin{clm}\label{Mtight}
For any $p\in\NP$,
$$\gt_p(r)\tends{r\to\infty}0.$$
\end{clm}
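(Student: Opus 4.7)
The plan is to show that $g_p(r)\to 0$ as $r\to\infty$; since $\gt_p$ is the left-continuous version of the non-increasing $g_p$, this is equivalent to $\gt_p(r)\to 0$. The main idea is to dominate the event defining $g_p(r)$ by the event that the $\cHd$-closure $\c C$ of $C$ meets a shrinking Euclidean-ball neighbourhood of some boundary point $x\in\bdi\Hd$, and then to combine the assumption $p\in\NP$ (namely $\Pr_p(x\in\c C)=0$ for every such $x$) with the compactness of $\bdi\Hd$ in order to upgrade this pointwise statement to a uniform one in $x$.

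First I would dominate $g_p(r)$ by the cluster-size tail $\Pr_p(M\hR\ge r)$: if $\ho\conn S_r$ in $\G\hR\cap L$, then the open path reaches a point on $S_r$, so $C\hR$ contains a point at $d_\hbd$-distance at least $r$ from $o$. Writing $\Phi\hR=h\cdot R_o$, where $R_o$ is the hyperbolic isometry fixing $o$ with derivative $R$, and noting that $\pbd$ rescales linearly under $y\mapsto hy$, one gets $M\hR=h\cdot M^{(1,R)}$; since $h\le 1$, the supremum over $(h,R)$ reduces to $\sup_{R\in O(d)}\Pr_p(M^{(1,R)}\ge r)$. In turn, up to a harmless replacement of $r$ by $r-1$ to deal with equality in the supremum, $\{M^{(1,R)}\ge r\}\subseteq\{C\cap R_o^{-1}[K_{r-1}]\neq\emptyset\}$, where $K_\rho:=\{y\in\Hd:d_\hbd(o,y)\ge\rho\}$.

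Next I would switch to the Poincar\'e disc model and the Euclidean metric it induces on $\cHd$. A short computation with the standard inversion $\mathbb{B}^d\to\Hd$ shows that $\c K_\rho$, viewed in the disc, is contained in a Euclidean ball of radius $\mu(\rho)\to 0$ centred at the point $p_\infty\in\bdi\Hd$ corresponding to $\infty$. The isometry $R_o$ acts on $\cHd$ as a Euclidean rotation about the centre $o$ of the disc, so as $R$ ranges over $O(d)$, $R_o^{-1}(p_\infty)$ ranges over all of $\bdi\Hd$; this gives
\[
 g_p(r) \le \sup_{x\in\bdi\Hd}\Pr_p\bigl(\c C\cap \cl B(x,\mu(r-1))\neq\emptyset\bigr),
\]
where $\cl B(x,\rho)$ denotes the closed Euclidean ball of the disc model.

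The last and main step is the uniform vanishing: for every $\e>0$ there should exist $\delta>0$ with $\sup_{x\in\bdi\Hd}\Pr_p(\c C\cap\cl B(x,\delta)\neq\emptyset)<\e$. For each fixed $x\in\bdi\Hd$, compactness of $\c C$ in $\cHd$ gives $\bigcap_{\rho>0}\{\c C\cap\cl B(x,\rho)\neq\emptyset\}=\{x\in\c C\}$, a set of $\Pr_p$-probability $0$ by $p\in\NP$; continuity of measure then yields some $\delta_x>0$ with $\Pr_p(\c C\cap\cl B(x,\delta_x)\neq\emptyset)<\e$. Extracting a finite subcover of $\bdi\Hd$ from $\{B(x,\delta_x/2):x\in\bdi\Hd\}$ and taking $\delta$ to be the minimum of the corresponding $\delta_{x_i}/2$ makes every $\cl B(x,\delta)$, for $x\in\bdi\Hd$, a subset of one of the chosen $\cl B(x_i,\delta_{x_i})$, yielding the uniform bound; choosing $r$ large enough that $\mu(r-1)<\delta$ completes the proof. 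I expect this final uniformity step to be the main obstacle, since the pointwise bound $\Pr_p(x\in\c C)=0$ is not enough on its own and one genuinely has to exploit the compactness of $\bdi\Hd$ together with continuity of measure.
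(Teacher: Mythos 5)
Your proposal is correct, and it shares the paper's opening move --- dominating $g_p(r)$ by the tail $\sup_{R\in O(d)}\Pr_p(M^{(1,R)}\ge r)$ of the Euclidean cluster size via the factorisation $\Phi\hR=(h\cdot)\circ R_o$ --- but it implements the crucial uniformity step differently. The paper stays on the compact group $O(d)$: it proves upper semi-continuity of $R\mapsto\Pr_p(M^{(1,R)}\ge r)$ through a $\limsup$-of-events argument (using the a.s.\ compactness of $\hc C^{(1,R)}$ from Remark \ref{rembdd} and the uniform convergence $\c\Phi_n\tends{n\to\infty}\id_{\cHd}$), and then runs a Dini-type argument with the open sets $U_\e(r)$. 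You instead transport the problem to the compact sphere $\bdi\Hd$: the event is dominated by ``$\c C$ meets a small disc-model ball around a boundary point'', these events shrink to $\{x\in\bdi C(o)\}$, which is null since $p\in\NP$, so continuity of measure gives a good radius at each $x$, and a finite subcover plus monotonicity in the radius yields uniformity. Your route makes the use of the full hypothesis ``$\Pr_p(x\in\bdi C(o))=0$ for \emph{every} $x\in\bdi\Hd$'' completely transparent and avoids stating a semicontinuity lemma; the paper's route isolates that semicontinuity as a reusable statement. Two points you leave implicit but which do hold: the measurability of $\{\c C\cap\cl{B}(x,\rho)\ne\emptyset\}$ (a countable intersection over $n$ of countable unions, over open edges of the cluster, of events of meeting the $\tfrac1n$-neighbourhood of the ball), and the fact that the identity $\bigcap_{\rho>0}\{\c C\cap\cl{B}(x,\rho)\ne\emptyset\}=\{x\in\c C\}$ genuinely needs the compactness of $\c C$ in $\cHd$, which you correctly invoke.
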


\begin{proof}
First, note that it is sufficient to prove
\begin{equation}\label{convMtight}
\sup_{R\in O(d)} \Pr_p( M^{(1,R)}\ge r)\tends{r\to\infty}0,
\end{equation}
because
\begin{align}
g_p(r) &= \sup_{(h,R)\in\IOd} \Pr_p(\ho\conn S_r\textrm{ in }\G\hR\cap L) \le\\
&\le \sup_{(h,R)\in\IOd} \Pr_p(\ho\conn S_{hr}\textrm{ in }\G\hR) =\quad\textrm{(because $hr\le r$)}\\
&= \sup_{R\in O(d)} \Pr_p(o\conn S_r\textrm{ in }\G^{(1,R)}) \le\quad\textrm{(by scaling the situation)}\\
&\le \sup_{R\in O(d)} \Pr_p(M^{(1,R)}\ge r),
\end{align}
so $g_p(r)\tends{r\to\infty}0$ and, equivalently, $\gt_p(r)\tends{r\to\infty}0$ will be implied.
To prove \eqref{convMtight}, we use upper semi-continuity of the function $O(d)\ni R\mapsto \Pr_p( M^{(1,R)}\ge r)$ for any $p\in\NP$ and $r>0$. Let us fix such $p$ and $r$ and let $(R_n)_n$ be a sequence of elements of $O(d)$ convergent to some $R$. Assume without loss of generality that the cluster $C^{(1,R)}$ is bounded in the Euclidean metric and, throughout this proof, condition on it all the events by default. We are going to show that
\begin{equation}\label{incllimsupM}
\limsup_{n\to\infty}\{M^{(1,R_n)}\ge r\}\subseteq\{ M^{(1,R)}\ge r\}.
\end{equation}

\begin{df}
For any isometry $\Phi$ of $\Hd$, let $\c\Phi$ denote the unique continuous extension of $\Phi$ to $\cHd$ (which is a homeomorphism of $\cHd$---see \cite[Corollary II.8.9]{BH}).
\end{df}
Put $\Phi_n=\Phi^{(1,R)}\circ(\Phi^{(1,R_n)})^{-1}$ and assume that the event $\limsup_{n\to\infty}\{M^{(1,R_n)}\ge r\}$ occurs. Then, for infinitely many values of $n$, all the following occur:
$$M^{(1,R_n)}\ge r \then \c C^{(1,R_n)}\textrm{ intersects }\c S_r \stackrel{\c\Phi_n(\cdot)}{\then} \c C^{(1,R)}=\hc C^{(1,R)}\textrm{ intersects }\c\Phi_n(\c S_r).$$
Let for any such $n$, $x_n$ be chosen from the set $\hc C^{(1,R)}\cap\c\Phi_n(\c S_r)$. Because $\hc C^{(1,R)}$ is compact, the sequence $(x_n)_n$ (indexed by a subset of $\N_+$) has an (infinite) subsequence $(x_{n_k})_{k=1}^\infty$ convergent to some point in $\hc C^{(1,R)}$. On the other hand, note that $\c\Phi_n\tends{n\to\infty}\id_{\cHd}$ uniformly in the Euclidean metric of the disc model (see Definition \ref{dfhHd}). Hence, the distance in that metric between $x_{n_k}\in\c\Phi_{n_k}(\c S_r)$ and $\c S_r$ tends to $0$ with $k\to\infty$, so
$$\lim_{k\to\infty} x_{n_k} \in \c S_r\cap \hc C^{(1,R)} = \hc S_r\cap \hc C^{(1,R)},$$
which shows that $ M^{(1,R)}\ge r$, as desired in \ref{incllimsupM}. Now,
\begin{align}
\limsup_{n\to\infty} \Pr_p(M^{(1,R_n)}\ge r) &\le \Pr_p(\limsup_{n\to\infty}\{M^{(1,R_n)}\ge r\}) \le\quad\textrm{(by an easy exercise)}\\
&\le \Pr_p( M^{(1,R)}\ge r),
\end{align}
which means exactly the upper semi-continuity of $R\mapsto \Pr_p( M^{(1,R)}\ge r)$.
\par Next, note that because for $p\in\NP$ and $R\in O(d)$, a.s.~$M^{(1,R)}<\infty$, we have
$$\Pr_p(M^{(1,R)}\ge r) \tends{r\to\infty} 0\quad\textrm{(decreasingly)}.$$

Hence, if for $r>0$ and $\e>0$ we put
$$U_\e(r)=\{R\in O(d): \Pr_p(M^{(1,R)}\ge r)<\e\},$$
then for any fixed $\e>0$,
\begin{equation}\label{Mtight.cupUe}
\bigcup_{r\nearrow\infty} U_\e(r) = O(d).
\end{equation}
$U_\e(r)$ is always an open subset of $O(d)$ by upper semi-continuity of $R\mapsto \Pr_p( M^{(1,R)}\ge r)$, so by the compactness of $O(d)$, the union \eqref{Mtight.cupUe} is indeed finite. Moreover, because $U_\e(r)$ increases as $r$ increases, it equals $O(d)$ for some $r>0$. It means that $\sup_{R\in O(d)}\Pr_p(M^{(1,R)}\ge r)\le\e$, whence $\sup_{R\in O(d)}\Pr_p(M^{(1,R)}\ge r)\tends{r\to\infty} 0$, as desired.
\end{proof}

Now, taking any $p_1\in(p,p_0)$ and $1>x_1>0$ in \eqref{s(x)} s.t.~$s(x_1) \le p_1-p$ and taking $r_1\ge a$ so large that $\gt_{p_1}(r_1)<x_1$, we obtain for $i\ge 1$, $g_i<x_i$ (by induction). Then, in the setting of \eqref{recur},
\begin{align}
p_{i+1} = p_1 - \sum_{j=1}^i 3g_i(1-\ln g_i) > p_1 - \sum_{j=1}^i 3x_i(1-\ln x_i) \ge
\intertext{(because $x\mapsto 3x(1-\ln x)$ is increasing for $x\in(0;1]$)}
\ge p_1 - s(x_1) \ge p.
\end{align}
Once we know that the recursion \eqref{recur} is well-defined, we use the constructed sequences to prove the lemma. First, note that for $k\ge 1$,
$$r_k = r_1/(g_1g_2\cdots g_{k-1}).$$
Further, the above claim implies
\begin{align}\label{chain(g)}
g_{k-1}^2 \le g_{k-1} g_{k-2}^2 \le \cdots \le g_{k-1}g_{k-2}\cdots g_2 g_1^2 = \frac{r_1}{r_k}g_1 = \frac{\d^2}{r_k},
\end{align}
where $\d=\sqrt{r_1 g_1}$. Now, let $r\ge r_1$. We have $r_k\tends{k\to\infty}\infty$ because $\frac{r_k}{r_{k+1}}=g_k \tends{k\to\infty}0$, so for some $k$, $r_{k-1}\le r<r_k$. Then,
\begin{align}
\gt_p(r) \le \gt_{p_{k-1}}(r) \le \gt_{p_{k-1}}(r_{k-1}) = g_{k-1} \le \frac{\d}{\sqrt{r_k}} < \frac{\d}{\sqrt{r}}
\end{align}
(from \eqref{chain(g)} and the monotonicity of $\gt_p(r)$ with regard to each of $p$ and $r$), which finishes the proof.
\end{proof}

\begin{rem}\label{diffprfs}
As declared in Section \ref{prflemMen}, in this remark we summarise the differences between the proof of Theorem \ref{lemMen} and the proof of Theorem (5.4) in \cite{Grim}:
\begin{enumerate}
\item First, the skeleton structure and most of the notation of the proof here  is borrowed from \cite{Grim}. The major notation that is different here, is ``$A^\d(r)$'' and ``$S_r$'' (respectively $A_n$ and $\bdi S(s)$ in \cite{Grim}).
\item To be strict, the proper line of the proof borrowed from \cite{Grim} starts by considering the functions $f_p$ instead of $g_p$ or $\gt_p$, although the functional inequality \eqref{fctineq(f)} involves both functions $f_\cdot(\cdot)$ and $\gt_\cdot(\cdot)$. In fact, each of the functions $f_\cdot(\cdot)$, $\gt_\cdot(\cdot)$ and $g_\cdot(\cdot)$ is a counterpart of the function $g_\cdot(\cdot)$ from \cite{Grim} at some stage of the proof. After proving inequality \eqref{fctineq(f)}, we pass to a couple of limits with it in order to obtain inequality \eqref{fctineq(gt)1} involving only $\gt_\cdot(\cdot)$ (the step not present in \cite{Grim}). This form is  needed to perform the repeated use of inequality \eqref{fctineq(gt)1} at the end of the proof of Theorem \ref{lemMen}.
\item Obviously, the geometry used here is much different from that in \cite{Grim}. In fact, we analyse the percolation cluster in $\G\hR\cap L_\d$ using the pseudometric $d_\hbd$ (in place of the graph metric $\d$ in \cite{Grim}). Consequently, the set $\RhR$ of possible values of the random variables $\rho$ in Lemma \ref{saus} is much richer than $\N$, the respective set for the graph $\Z^d$. Moreover, the functions $\gt_p$ arise from the percolation process on the whole $\G\hR\cap L$, so the distribution of the random variables $M_i$ is not necessarily discrete. That cause the need for using integrals instead of sums, when concerned with those random variables, especially in the proof of Corollary \ref{corsaus}. All that leads also to a few other minor technical differences between the proof here and the proof in \cite{Grim}.
\item The author tried to clarify the use of the assumption on $\sum_{i=1}^k r_i$ in Lemma \ref{saus} and why Wald's equation can be used in the proof of Lemma \ref{lemEN>=frac}, which could be found quite hidden in \cite{Grim}.
\item The proof of Lemma \ref{gt<=sqrt} itself has a little changed structure (compared to the proof of Lemma (5.24) in \cite{Grim}) and contains a proof of the convergence $\gt_p(r)\tends{r\to\infty}0$ (Claim \ref{Mtight}).
\end{enumerate}
\end{rem}

\begin{rem}\label{whyLd}
In order to prove Theorem \ref{lemMen}, one could try to consider the percolation processes on the whole graph $\G\hR\cap L$ (without restricting it to $L_\d$) in order to obtain functional inequality similar to \eqref{fctineq(gt)1}, involving only one function. That approach caused many difficulties to the author, some of which have not been overcome. Restricting the situation to $L_\d$ makes the event $A^\d$ depend on the states of only finitely many edges. That allows e.g.\ to condition the event $A^\d(r)\cap B$ on the family of events $\{\Gamma\textrm{ a witness for }B\}$, where $\Gamma$ runs over a countable set (in the proof of Lemma \ref{saus}) or to use BK inequality and Russo's formula.
\end{rem}

\section*{Acknowledgements}

\end{document}